\newtheorem{theorem}{Theorem}[section]
\newtheorem{lemma}{Lemma}[section]
\newtheorem{corollary}{Corollary}[section]
\newtheorem{question}{Question}[section]
\newtheorem{remark}{Remark}[section]
\newcounter{theor}
\newtheorem{thm}[theor]{Theorem}
\DeclareMathOperator{\inter}{int}
\def\bd{\mathop\mathrm{bd}\nolimits}
\def\cl{\mathop\mathrm{cl}\nolimits}
\def\R{\mathbb{R}}
\def\N{\mathbb{N}}
\def\Z{\mathbb{Z}}
\def\Q{\mathbb{Q}}
\def\vol{\mathrm{vol}}
\def\G{\mathrm{G}}
\def\e{\mathrm{e}}
\newcommand{\dlat}{\mathrm{d}}
\def\Media#1#2#3#4{\mathcal{M}_{#1}^{#4}\left(#2,#3\right)}
\def\floor#1{\left\lfloor #1 \right\rfloor}
\def\ceil#1{\left\lceil #1 \right\rceil}
\def\G#1{\mathrm{G}_{#1}}
\def\fun{\Delta}
\def\symbol{\diamond}
\numberwithin{equation}{section}
\begin{document}
\title[Brunn-Minkowski inequalities for the lattice point enumerator]{Brunn-Minkowski type inequalities
for the lattice point enumerator}

\author{David Iglesias}
\address{Departamento de Matem\'aticas, Universidad de Murcia, Campus de
Espinar\-do, 30100-Murcia, Spain}
\email{david.iglesias@um.es} \email{jesus.yepes@um.es}

\author{Jes\'us Yepes Nicol\'as}

\author{Artem Zvavitch}
\address{Department of Mathematical Sciences, Kent State University, Kent, OH USA,}
\email{zvavitch@math.kent.edu}

\thanks{The first and second authors are
partially supported by MICINN/FEDER project PGC2018-097046-B-I00
and ``Programa de Ayudas a Grupos de Excelencia de la Regi\'on de Murcia'', Fundaci\'on S\'eneca,
19901/GERM/15.
The third author is supported in part by the United States - Israel Binational Science Foundation (BSF) and Simons Foundation.
This work has been partially developed during a research stay of
the first author at Kent State University, USA}

\subjclass[2010]{Primary 52C07, 11H06; Secondary 52A40}

\keywords{Brunn-Minkowski inequality, lattice point enumerator, integer lattice, Borell-Brascamp-Lieb inequality}

\begin{abstract}
Geometric and functional Brunn-Minkowski type inequa\-li\-ties for the lattice point enumerator $\G{n}(\cdot)$ are provided.
In particular, we show that $$\G{n}((1-\lambda)K + \lambda L + (-1,1)^n)^{1/n}\geq (1-\lambda)\G{n}(K)^{1/n}+\lambda\G{n}(L)^{1/n}$$ for any non-empty  bounded sets $K, L\subset\R^n$ and all $\lambda\in(0,1)$.

We also show that these new discrete versions imply the classical results, and  discuss some links with
other related inequalities.
\end{abstract}

\maketitle

\section{Introduction and notation}

As usual, we write $\R^n$ to represent the $n$-dimensional
Euclidean space, and we denote by $\e_i$ the $i$-th canonical unit
vector. We set $(x,y)$ for the open segment with endpoints $x,y\in\R^n$.
The $n$-dimensional volume of a compact set
$K\subset\R^n$, i.e., its $n$-dimensional Lebesgue measure, is
denoted by $\vol(K)$, and as a discrete counterpart, we use $|A|$
to represent the cardinality of a finite subset $A\subset\R^n$.

Let $\Z^n$ be the integer lattice, i.e., the lattice of all points with
integral coordinates in $\R^n$.
We will denote by $\G{n}(\cdot)$ the lattice point enumerator for the integer lattice $\Z^n$, i.e., $\G{n}(M)=|M\cap\Z^n|$.
By $\floor{x}$ we denote the floor function of the real number $x$, i.e., the
greatest integer less than or equal to $x$.
Similarly, $\ceil{x}$ represents the ceiling function of $x$, namely, the least integer greater than or equal to $x$.

Finally, given a set $M\subset\R^n$, $\chi_{_M}$ represents the characteristic function of $M$ and, moreover, we denote by $\inter M$, $\bd M$ and $\cl M$ its interior, boundary and closure, respectively.
Furthermore, given $r>0$, $rM$ stands for the set $\{rm:\, m\in M\}$.

\medskip

Relating the volume of the Minkowski addition of two sets in terms of their volumes, one is led to the famous \emph{Brunn-Minkowski inequality} (for extensive survey articles on this and related  inequalities we refer the
reader to \cite{Brt,G}).
One form of it asserts that if $\lambda\in(0,1)$ and $K$ and $L$ are non-empty compact subsets of $\R^n$ then
\begin{equation}\label{e:BM}
\vol\bigl((1-\lambda)K+\lambda L\bigr)^{1/n}\geq
(1-\lambda)\vol(K)^{1/n}+\lambda\vol(L)^{1/n}.
\end{equation}
Here $+$ is used for the Minkowski sum, i.e.,
$A+B=\{a+b:\, a\in A, \, b\in B\}$ for any non-empty sets $A, B\subset\R^n$.
Moreover, from the homogeneity of the volume, \eqref{e:BM} is equivalent to
\begin{equation}\label{e:BM_additive}
\vol\bigl(K+L\bigr)^{1/n}\geq
\vol(K)^{1/n}+\vol(L)^{1/n}.
\end{equation}

Next we move to the discrete setting, i.e., we consider finite
sets of (integer) points which are not necessarily
full-dimensional unless indicated otherwise. It can easily be seen
that one cannot expect to obtain a Brunn-Minkowski type inequality for
the cardinality in the classical form. Indeed, simply taking
$A=\{0\}$ to be the origin and any finite set $B\subset\Z^n$, we get
\[
|A+B|^{1/n}<|A|^{1/n}+|B|^{1/n}.
\]
Therefore, a discrete Brunn-Minkowski type inequality should either have a
different structure or involve modifications of the sets. A first example
is the simple inequality
\begin{equation}\label{e:|A+B|>|A|+|B|-1}
|A+B|\geq|A|+|B|-1,
\end{equation}
for finite $A,B\subset\Z^n$ (see e.g. \cite[Chapter~2]{Tao}).

In \cite{GG}, Gardner and Gronchi obtained an engaging
discrete Brunn-Min\-kows\-ki inequality: they proved that if $A,B$ are finite
subsets of the integer lattice $\Z^n$, with dimension $\dim B=n$, then
\begin{equation}\label{e:B-M_ineq_GG}
|A+B|\geq \bigl|D_{|A|}^B+D_{|B|}^B\bigr|.
\end{equation}
Here $D_{|A|}^B, D_{|B|}^B$ are {\it $B$-initial segments}: for $m\in\N$,
$D_m^B$ is the set of the first $m$ points of $\Z^n_+$ in the so-called
``$B$-order'', which is a particular order defined on $\Z^n_+$ depending
only on the cardinality of $B$. For a proper definition and a deep study
of it we refer the reader to~\cite{GG}. As consequences of
\eqref{e:B-M_ineq_GG} they also get two additional nice discrete
Brunn-Minkowski type inequalities:
\begin{equation*}
|A+B|^{1/n}\geq|A|^{1/n}+\dfrac{1}{(n!)^{1/n}}\bigl(|B|-n\bigr)^{1/n}
\end{equation*}
and, if $|B|\leq|A|$, then
\[
|A+B|\geq|A|+(n-1)|B|+\bigl(|A|-n\bigr)^{(n-1)/n}\bigl(|B|-n\bigr)^{1/n}-\dfrac{n(n-1)}{2}.
\]
These inequalities improve previous results obtained by Ruzsa in
\cite{Ru1,Ru2}.

\smallskip

An alternative to getting a ``classical'' Brunn-Minkowski type inequality
might be to transform (one of) the sets involved in the problem. In this regard,
in \cite{HCIYN} an extension $\bar{A}$ of $A$, is defined for any non-empty and finite set $A\subset\Z^n$
(for $n=1$ then $\bar{A}=A\cup\{\max(A)+1\}$, whereas for $n>1$ the set $\bar{A}$ is obtained by first adding the maximal cardinality section of $A$, and then applying the corresponding extension to every section of the latter
new set; we refer to Section 2.1 in \cite{HCIYN} for its precise definition and properties).
Using this technique the following discrete counterpart for \eqref{e:BM_additive} was shown:
\begin{thm}[\cite{HCIYN}]\label{t:B-M_discrete_adding}
Let $A,B\subset\Z^n$ be finite, $A,B\neq\emptyset$. Then
\begin{equation}\label{e:B-M_discrete_adding}
\bigl|\bar{A}+B\bigr|^{1/n}\geq |A|^{1/n}+|B|^{1/n}.
\end{equation}
The inequality is sharp.
\end{thm}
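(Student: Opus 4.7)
The plan is to proceed by induction on the dimension $n$, following the same inductive structure by which the extension $\bar{A}$ is itself defined.

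The base case $n=1$ is immediate: since $\bar{A}=A\cup\{\max A+1\}$ yields $|\bar{A}|=|A|+1$, applying the elementary bound \eqref{e:|A+B|>|A|+|B|-1} to the pair $\bar{A},B$ gives
\[
|\bar{A}+B|\;\geq\;|\bar{A}|+|B|-1\;=\;|A|+|B|,
\]
which is \eqref{e:B-M_discrete_adding} for $n=1$.

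For the inductive step I would assume the inequality in dimension $n-1$ and take $A,B\subset\Z^n$ finite and non-empty. Slice both sets along $\e_n$, writing $A(t)=\{x\in\Z^{n-1}:(x,t)\in A\}$ and analogously $B(t)$ and $\bar A(t)$. By the construction of $\bar A$, each non-empty section of $\bar A$ at an original level $t$ of $A$ is the $(n-1)$-dimensional extension $\overline{A(t)}$, while one extra top level of $\bar A$ carries a copy of $\overline{A(t^*)}$, where $A(t^*)$ is a section of $A$ of maximum cardinality. The inductive hypothesis therefore delivers, for every pair of levels $(i,j)$ with $\bar A(i),B(j)\neq\emptyset$,
\[
\bigl|\bar A(i)+B(j)\bigr|^{1/(n-1)}\;\geq\;\bigl|A(i')\bigr|^{1/(n-1)}+\bigl|B(j)\bigr|^{1/(n-1)},
\]
where $i'=i$ at the original levels of $A$ and $i'=t^*$ at the extra top level. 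Since $(\bar A+B)(k)\supseteq \bar A(i)+B(j)$ whenever $i+j=k$, summing over levels yields
\[
|\bar A+B|\;=\;\sum_k\bigl|(\bar A+B)(k)\bigr|\;\geq\;\sum_k\bigl|\bar A(i_k)+B(j_k)\bigr|
\]
for any matching $k\mapsto(i_k,j_k)$ using distinct combined levels $k$. I would employ a monotone ``staircase'' matching that walks along the non-empty levels of $\bar A$ and $B$ in increasing order; thanks to the extra level of $\bar A$, this visits one more pairing than the naive Cauchy--Davenport count allows, which is precisely what closes the $-1$ gap that otherwise obstructs a clean discrete Brunn--Minkowski inequality.

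The main obstacle is the final assembly: passing from the per-level $(n-1)$-dimensional lower bounds to the $n$-dimensional estimate $|\bar A+B|\geq\bigl(|A|^{1/n}+|B|^{1/n}\bigr)^n$. This is the discrete counterpart of the Fubini step in the continuous Brunn--Minkowski proof, and the natural route is to rearrange the sections in decreasing order of cardinality and combine the sectional bounds via an $n$-th power inequality of Jensen/H\"older type, with the 1-dimensional base case playing the role of the one-dimensional Brunn--Minkowski ingredient along the $\e_n$-axis. Sharpness is then verified on standard examples such as $A=B=\{0,1,\dots,m-1\}^n$, which saturate \eqref{e:B-M_discrete_adding}.
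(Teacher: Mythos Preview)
The paper does not prove Theorem~\ref{t:B-M_discrete_adding} at all: it is stated in the introduction as a known result from \cite{HCIYN} and is used only as background and motivation. There is therefore no ``paper's own proof'' to compare against; any assessment must be of your argument on its own terms.

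On those terms, the proposal has a genuine gap. The base case $n=1$ is fine, and the inductive setup (slicing along $\e_n$, identifying $\bar A(i)$ with $\overline{A(i)}$ at the original levels and with $\overline{A(t^*)}$ at the extra top level, then invoking the $(n-1)$-dimensional hypothesis on each pair of sections) is the right shape. But you yourself flag the crucial step---passing from the sectional bounds $\bigl|\bar A(i)+B(j)\bigr|^{1/(n-1)}\geq |A(i')|^{1/(n-1)}+|B(j)|^{1/(n-1)}$ to the global inequality $|\bar A+B|\geq\bigl(|A|^{1/n}+|B|^{1/n}\bigr)^n$---as ``the main obstacle'', and then do not carry it out. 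Saying that one would ``rearrange the sections in decreasing order of cardinality and combine the sectional bounds via an $n$-th power inequality of Jensen/H\"older type'' is a description of a hope, not a proof: the continuous Fubini step works because volume is $1$-homogeneous and sections vary continuously, neither of which holds here, and the discrete analogue requires a careful choice of the matching $k\mapsto(i_k,j_k)$ together with a concrete inequality that actually produces the exponent $1/n$ from the $1/(n-1)$ sectional exponents. Your ``staircase matching'' sentence gestures at the right mechanism (the extra level of $\bar A$ absorbing the $-1$), but you never write down which pairs are matched, never verify that the chosen $k$'s are distinct, and never state or prove the numerical inequality that closes the induction. Until that step is written out in full, the argument is incomplete.
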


Aiming to get a discrete version of \eqref{e:BM}, it is worth noting the following:
for any pair of non-empty finite sets $A, B\subset\R^n$, by \eqref{e:|A+B|>|A|+|B|-1} (and using that $|A|, |B|\geq1$), one has
\begin{equation*}
\begin{split}
\bigl|(1-\lambda)A+\lambda B\bigr|&\geq \bigl|(1-\lambda)A\bigr|+\bigl|\lambda B\bigr|-1 =|A|+|B|-1 \\
&\geq (1-\lambda)|A|+\lambda |B|\geq \bigr((1-\lambda)|A|^{1/n}+\lambda |B|^{1/n}\bigr)^n,
\end{split}
\end{equation*}
where the last inequality follows from the convexity of the function $t\mapsto t^n$, for $t\geq0$.
Nevertheless this inequality is meaningless from a geometric point of view: the point is that while the quantities $|A|, |B|$ on the right-hand side are reduced by the factors $(1-\lambda)$ and $\lambda$, the sets $(1-\lambda)A$ and $\lambda B$ on the left-hand side have the same cardinality as $A$ and $B$, respectively. A possible solution would be to involve a more natural way to ``count points'' according to dilatations, namely, using the lattice point enumerator $\G{n}$ (for compact subsets of $\R^n$) instead of the cardinality $|\cdot|$ (for finite subsets of $\R^n$).

Again, one cannot expect to obtain a Brunn-Minkowski type inequality for $\G{n}$ in the classical form \eqref{e:BM} (which, as we have mentioned before, would be a similar situation to what happens for $|\cdot|$ regarding a discrete version of \eqref{e:BM_additive}). Indeed, just by taking $\lambda=1/2$, $K=[0,m-\epsilon]^n$ and $L=[0,m+\epsilon/2]^n$, with $m\in\N$ and $0<\epsilon<1$, one gets
\begin{equation}\label{e:counter_BM_G}
\G{n}\left(\frac{K+L}{2}\right)^{1/n}=m<m+\frac{1}{2}=\frac{\G{n}(K)^{1/n}+\G{n}(L)^{1/n}}{2}.
\end{equation}
Thus, as in \eqref{e:B-M_discrete_adding}, an alternative to get such an inequality for the lattice point enumerator would be to consider a certain extension of $(1-\lambda)K + \lambda L$. So, we pose the following question:
\begin{question}\label{q: BM_G}
Given compact sets $K, L\subset\R^n$ containing points from $\Z^n$,  what is the ``best'' way to define a set $M_\lambda$ such that
$(1-\lambda)K + \lambda L \subset M_\lambda$  and
\begin{equation*}
\G{n}(M_\lambda)^{1/n}\geq (1-\lambda)\G{n}(K)^{1/n}+\lambda\G{n}(L)^{1/n}
\end{equation*}
holds for a given $\lambda\in(0,1)$?
\end{question}

\section{Main results}
Here we give an answer to Question \ref{q: BM_G}, which we view as a discrete counterpart of \eqref{e:BM}:	
\begin{theorem}\label{t: BM_lattice_point_no_G(K)G(L)>0}
Let $\lambda \in (0,1)$ and let $K, L \subset \R^n$ be non-empty bounded sets.
Then
\begin{equation}\label{e: BM_lattice_point_no_G(K)G(L)>0}
\G{n}\bigl((1-\lambda)K + \lambda L + (-1,1)^n\bigr)^{1/n}\geq (1-\lambda)\G{n}(K)^{1/n}+\lambda\G{n}(L)^{1/n}.
\end{equation}
The inequality is sharp.
\end{theorem}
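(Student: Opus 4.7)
The plan is to reduce to the classical Brunn--Minkowski inequality \eqref{e:BM} by replacing lattice point counts with volumes of sets built from integer points. Set $A := K\cap\Z^n$ and $B := L\cap\Z^n$, and assume first that both are non-empty. Let $A' := A + [0,1)^n$ and $B' := B + [0,1)^n$, so that each is a disjoint union of half-open unit cubes with $\vol(A') = |A| = \G{n}(K)$ and $\vol(B') = \G{n}(L)$. Applying \eqref{e:BM} to $A', B'$ and using the elementary identity $(1-\lambda)[0,1)^n + \lambda[0,1)^n = [0,1)^n$, one obtains
\[
\vol\bigl(T + [0,1)^n\bigr)^{1/n} \geq (1-\lambda)\G{n}(K)^{1/n} + \lambda\G{n}(L)^{1/n},
\]
where $T := (1-\lambda)A + \lambda B$.

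The crux is then to bound this volume from above by a lattice point count via a floor-rounding argument. Set $Z := \{\floor{p} : p \in T + [0,1)^n\} \subseteq \Z^n$. Since $p \in \floor{p} + [0,1)^n$ for every $p$, we have $T + [0,1)^n \subseteq Z + [0,1)^n$, a disjoint union of unit cubes, giving $\vol(T + [0,1)^n) \leq |Z|$. On the other hand, if $z = \floor{p}$ with $p = t + c$, $t \in T$, $c \in [0,1)^n$, then $z - t = c - (p - z) \in [0,1)^n - [0,1)^n \subseteq (-1,1)^n$, so $Z \subseteq T + (-1,1)^n$ and $|Z| \leq \G{n}(T + (-1,1)^n)$. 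Combining these bounds with the monotone inclusion $T + (-1,1)^n \subseteq (1-\lambda)K + \lambda L + (-1,1)^n$ yields \eqref{e: BM_lattice_point_no_G(K)G(L)>0}. This discretization step is the main obstacle: it must simultaneously convert a continuous volume into an integer count and explain why precisely the enlargement $(-1,1)^n$ suffices.

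If $A = \emptyset$ (analogously if $B = \emptyset$), pick any $k_0 \in K$ and rerun the argument with the singleton $\{k_0\}$ in place of $A$; the resulting lower bound is $(1-\lambda) + \lambda\G{n}(L)^{1/n} \geq \lambda\G{n}(L)^{1/n}$, matching the right-hand side. If both are empty then any open cube of side $2$ contains an integer point, so the inequality is trivial. For sharpness, the choice $K = L = \{0,1,\ldots,m-1\}^n$ gives $\G{n}(K) = \G{n}(L) = m^n$, while $(1-\lambda)K + \lambda L + (-1,1)^n \subseteq (-1,m)^n$ contains exactly $m^n$ integer points, realizing equality throughout.
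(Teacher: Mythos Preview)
Your proof is correct and takes a genuinely different, considerably more elementary route than the paper's. The paper first establishes a discrete Borell--Brascamp--Lieb inequality (Theorem~\ref{t: BBL discreta (caso p-media)}) by induction on the dimension, built on the one-dimensional Lemma~\ref{l: 1-dim_BM_G}, and then deduces Corollary~\ref{c: BM_lattice_point}; the degenerate case $\G{n}(K)\G{n}(L)=0$ then requires a further auxiliary result (Theorem~\ref{t: BM_lattice_point_[-(q-1)/q,(q-1)/q]}) together with a delicate rational/irrational dichotomy on $\lambda$. You instead reduce directly to the continuous Brunn--Minkowski inequality via the identity $(1-\lambda)[0,1)^n+\lambda[0,1)^n=[0,1)^n$ and the floor-rounding bound $\vol\bigl(T+[0,1)^n\bigr)\leq\G{n}\bigl(T+(-1,1)^n\bigr)$, which cleanly isolates why the open cube $(-1,1)^n$ is exactly the enlargement needed.

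What each approach buys: yours is short and self-contained once \eqref{e:BM} is granted, and the case $\G{n}(K)=0$ falls out with no extra machinery. The paper's route is heavier but yields the stronger functional inequality \eqref{e: BBL discreta (caso p-media)} along the way, and the authors further exploit their framework to show that the discrete inequality \emph{implies} the continuous one --- a direction your reduction obviously cannot provide. It is worth noting that the paper remarks (quoting Gardner--Gronchi) that replacing lattice points by compact sets and invoking the classical inequality is ``doomed to failure''; that caveat concerns the additive form \eqref{e:BM_additive}, where $C+C=2C$ introduces a spurious factor, and does not apply to your convex-combination argument --- indeed the paper itself uses essentially the same device to prove Theorem~\ref{t:B-M_discrete_sum}. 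One cosmetic point: \eqref{e:BM} is stated in the paper only for compact sets, so strictly you should pass to the closures $A+[0,1]^n$ and $B+[0,1]^n$ (same volumes, and $(1-\lambda)[0,1]^n+\lambda[0,1]^n=[0,1]^n$ still holds) before invoking it, then run the floor argument on $T+[0,1)^n$ as written; nothing else changes.
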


We point out the following: when dealing with arbitrary non-empty subsets  $K, L\subset\R^n$ (i.e., not necessarily bounded), from \eqref{e: BM_lattice_point_no_G(K)G(L)>0} we immediately get that
\begin{equation*}
\begin{split}
\G{n}\bigl(M_\lambda + (-1,1)^n\bigr)^{1/n}
&\geq \sup_{m \in \N}\G{n}\bigl((1-\lambda)K_m + \lambda L_m + (-1,1)^n\bigr)^{1/n}\\
&\geq \sup_{m \in \N}\left((1-\lambda)\G{n}(K_m)^{1/n}+\lambda\G{n}(L_m)^{1/n}\right)\\
&=(1-\lambda)\G{n}(K)^{1/n}+\lambda\G{n}(L)^{1/n}
\end{split}
\end{equation*}
for any $\lambda \in (0,1)$, where $M_\lambda=(1-\lambda)K + \lambda L$, $K_m=K\cap [-m, m]^n$ and $L_m=L\cap [-m, m]^n$. So, for the sake of simplicity, we will present here the results in the setting of bounded sets, although they also hold in full generality.

The core ingredient in the proof of Theorem \ref{t: BM_lattice_point_no_G(K)G(L)>0} is its functional analogue.
To introduce such a result, we present an analytical counterpart for functions of the Brunn-Minkowski inequality,
the so-called \emph{Borell-Brascamp-Lieb inequality}, originally proved in
\cite{Borell} and \cite{BL}.
To this end, we first recall the definition of the $p$-mean $\Media{p}{\cdot}{\cdot}{\lambda}$
of two non-negative numbers, where $p$ is a parameter varying in $\R\cup\{\pm\infty\}$ (for a general reference for $p$-means of non-negative numbers, we refer the reader to the classic text of Hardy, Littlewood, and P\'olya \cite{HaLiPo} and to the handbook \cite{Bu}):
we consider first the case $p\in\R$, with $p\neq0$: given $a,b>0$ we set
\[
\Media{p}{a}{b}{\lambda}=\bigl((1-\lambda)a^p+\lambda b^p\bigr)^{1/p}.
\]
For $p=0$, we write $\Media{0}{a}{b}{\lambda}=a^{1-\lambda}b^{\lambda}$.
To complete the picture, for $p=\pm \infty$ we set $\Media{\infty}{a}{b}{\lambda}=\max\{a,b\}$ and
$\Media{-\infty}{x}{y}{\lambda}=\min\{a,b\}$. Finally, if $ab=0$, we define $\Media{p}{a}{b}{\lambda}=0$ for all $p\in\R\cup\{\pm\infty\}$. Note that $\Media{p}{a}{b}{\lambda}=0$, if $ab=0$, is redundant for all $p\leq0$, however it is relevant for $p>0$.
The reason to modify in this way the definition of $p$-mean given in \cite{HaLiPo} is due to the classical statement of the Borell-Brascamp-Lieb inequality, which is
collected below. In fact, without such a modification, the thesis of
the latter result would not have mathematical interest.

The following theorem (see
also \cite{G} for a detailed presentation), as previously stated, can be regarded as the
functional counterpart of the Brunn-Minkowski inequality.
In fact, a straightforward proof of \eqref{e:BM} for compact sets $K, L\subset\R^n$ of positive volume
can be obtained by applying \eqref{e:BBL_means} to the characteristic functions $f=\chi_{_K}$, $g=\chi_{_L}$ and
$h=\chi_{_{(1-\lambda)K + \lambda L}}$ with $p=\infty$. Moreover, in the literature, the case $p=0$ is referred to
as the \emph{Pr\'ekopa-Leindler inequality}, which is a powerful tool,
in particular when dealing with \emph{log-concave functions}.
\begin{thm}[The Borell-Brascamp-Lieb inequality]\label{t:BBL_means}
Let $\lambda\in(0,1)$. Let $-1/n\leq p\leq\infty$ and let $f,g,h:\R^n\longrightarrow\R_{\geq0}$ be measurable
functions such that
\begin{equation*}
h((1-\lambda)x + \lambda y)\geq\Media{p}{f(x)}{g(y)}{\lambda}
\end{equation*}
for all $x,y\in\R^n$. Then
\begin{equation}\label{e:BBL_means}
\int_{\R^n}h(x)\,\dlat x\geq
\Media{\frac{p}{np+1}}{\int_{\R^n}f(x)\,\dlat x}{\int_{\R^n}g(x)\,\dlat x}{\lambda}.
\end{equation}
\end{thm}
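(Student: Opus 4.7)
The plan is to prove Theorem \ref{t:BBL_means} by the standard two-step strategy: first handle the one-dimensional case by a monotone-transport argument, then induct on dimension via Fubini.

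\textbf{Step 1 (dimension one).} After reducing to the case $F := \int_\R f \in (0,\infty)$ and $G := \int_\R g \in (0,\infty)$ (the degenerate cases need some care when $p > 0$ because of the convention $\Media{p}{a}{0}{\lambda} = 0$, and are treated by truncation and monotone convergence), I would build transport maps $u,v : (0,1) \to \R$ by the implicit relations $\int_{-\infty}^{u(t)} f = tF$ and $\int_{-\infty}^{v(t)} g = tG$. Both are monotone and absolutely continuous, with $u'(t)\,f(u(t)) = F$ and $v'(t)\,g(v(t)) = G$ a.e. Setting $w(t) = (1-\lambda)u(t) + \lambda v(t)$, which is increasing, the change of variables yields
$$\int_\R h \;\geq\; \int_0^1 h(w(t))\, w'(t)\, \dlat t,$$
and the hypothesis applied at $x = u(t),\; y = v(t)$ bounds the integrand from below by
$$\Media{p}{a}{b}{\lambda} \cdot \left((1-\lambda)\frac{F}{a} + \lambda\frac{G}{b}\right), \qquad a := f(u(t)),\; b := g(v(t)).$$
The proof reduces to the pointwise lemma that for every $a,b,F,G > 0$ and $p \geq -1$,
$$\Media{p}{a}{b}{\lambda} \cdot \left((1-\lambda)\frac{F}{a} + \lambda\frac{G}{b}\right) \;\geq\; \Media{p/(p+1)}{F}{G}{\lambda}.$$
For $p > 0$ this follows from Hölder's inequality with exponents $p+1$ and $(p+1)/p$ applied to the sum $\sum_i w_i (a_i\phi_i)^{p/(p+1)}$ with $\phi_i \in \{F/a, G/b\}$ and $w_i \in \{1-\lambda,\lambda\}$; for $p = 0$ it is AM-GM; for $p \in (-1,0)$ it is reverse Hölder; for $p = -1$ the left-hand side is a convex combination of $F$ and $G$, hence at least $\min(F,G) = \Media{-\infty}{F}{G}{\lambda}$.

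\textbf{Step 2 (induction on $n$, using Fubini).} For $n \geq 2$, split $\R^n = \R \times \R^{n-1}$, write $x = (s, x')$, and consider the slices $f_s(x') := f(s, x')$, $g_t(y') := g(t, y')$, $h_r(z') := h(r, z')$. For any fixed $s, t \in \R$ and $r := (1-\lambda)s + \lambda t$, the hypothesis restricted to these slices is an $(n-1)$-dimensional BBL hypothesis with the same $p$; the inductive hypothesis then gives
$$H(r) \;\geq\; \Media{p/((n-1)p+1)}{F(s)}{G(t)}{\lambda}, \qquad F(s) := \int f_s,\;\; G(t) := \int g_t,\;\; H(r) := \int h_r.$$
This is precisely the one-dimensional BBL hypothesis for $F, G, H$ with exponent $q := p/((n-1)p+1)$, which lies in $[-1,\infty]$ exactly when $p \geq -1/n$. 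Applying Step~1 to $F, G, H$ and using the algebraic identity $q/(q+1) = p/(np+1)$, together with Fubini to identify $\int H = \int h$ and likewise for $F, G$, yields the desired inequality.

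\textbf{Main obstacle.} The principal difficulty is the pointwise $p$-mean inequality at the end of Step~1: it is the analytic heart of the argument and must be handled case-by-case across the ranges of $p$, with the endpoint $p = -1/n$ appearing naturally as the value beyond which $q$ would fall below $-1$ and the inductive chain would break. A secondary difficulty is the bookkeeping for the degenerate cases in Step~1 (integrals that vanish or are infinite), which requires a careful limiting argument, particularly when $p > 0$ where the convention $\Media{p}{a}{0}{\lambda} = 0$ becomes restrictive.
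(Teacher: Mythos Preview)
The paper does not prove Theorem~\ref{t:BBL_means} at all: it is stated there as a classical result, attributed to Borell and to Brascamp--Lieb, with a pointer to Gardner's survey for a detailed presentation. There is therefore nothing in the paper to compare your argument against.

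That said, your outline is the standard textbook proof (one-dimensional transport plus Fubini induction), and the structure is correct. The algebraic identity $q/(q+1)=p/(np+1)$ for $q=p/((n-1)p+1)$ does hold, and the constraint $p\geq -1/n$ is exactly what forces $q\geq -1$ so that Step~1 applies at each inductive stage. One point worth tightening in Step~1 is the justification that $u$ and $v$ are genuinely absolutely continuous (not merely monotone) and that the change-of-variables inequality $\int_\R h \geq \int_0^1 h(w(t))\,w'(t)\,\dlat t$ is valid for a merely measurable $h$; the usual fix is to regularize or to argue via the distribution function. The degenerate cases you flag (one integral zero or infinite, especially for $p>0$) do require the truncation/limit argument you mention, since the hypothesis then gives no information on $h$ away from the support of the vanishing function.
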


To state our main result and henceforth, we will need the following notation:
for a function $\phi:\R^n\longrightarrow\R_{\geq0}$, we denote by $\phi^{\symbol}:\R^n\longrightarrow\R_{\geq0}$ the function given by	
\[\phi^{\symbol}(z) = \sup_{u\in(-1,1)^n}\phi(z+u)  \quad \text{ for all } z\in\R^n.\]
Such an extension of the function $\phi$ is nothing but the \emph{Asplund product} $\star$
(also referred to as the \emph{sup-convolution}, which can be regarded as the functional analogue of the Minkowski sum in the setting of log-concave functions, for which we refer the reader to \cite[Section~9.5]{Sch2} and the references therein)
of the functions $\phi$ and $\chi_{(-1,1)^n}$. Indeed,
\begin{equation*}
\begin{split}
\phi^{\symbol}(z) &= \sup_{u\in(-1,1)^n}\phi(z+u) =\sup_{u \in \R^n} \phi (z+ u)  \chi_{(-1,1)^n}(-u)\\
&=\sup_{u_1+u_2=z} \phi (u_1)  \chi_{(-1,1)^n}(u_2) = \bigl(\phi \star \chi_{(-1,1)^n}\bigr)(z).
\end{split}
\end{equation*}

\smallskip

Our main result reads as follows:
\begin{theorem}\label{t: BBL discreta (caso p-media)}
Let $\lambda\in(0,1)$ and let $K, L \subset \R^n$ be non-empty bounded sets. Let $-1/n\leq p\leq\infty$ and let
$f,g,h:\R^n\longrightarrow \R_{\geq 0}$ be functions such that
\[h((1-\lambda)x + \lambda y)\geq\Media{p}{f(x)}{g(y)}{\lambda}\]
for all $x \in K$, $y \in L$. Then
\begin{equation}\label{e: BBL discreta (caso p-media)}
\sum_{z \in M\cap\Z^n} h^\symbol(z)\geq \Media{\frac{p}{np+1}}{\sum_{x \in K\cap\Z^n} f(x)}{\sum_{y \in L\cap\Z^n} g(y)}{\lambda},
\end{equation}
where $M = (1-\lambda)K+\lambda L + (-1,1)^n$.
\end{theorem}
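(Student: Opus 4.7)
The plan is to reduce the statement to the classical Borell-Brascamp-Lieb inequality (Theorem~\ref{t:BBL_means}) by lifting the discrete sums to integrals of suitable step functions. Let $Q=[-1/2,1/2)^n$, so that $\{z+Q:z\in\Z^n\}$ partitions $\R^n$ into unit-volume cubes. I would define the auxiliary functions
\[
\tilde f(x)=\sum_{z\in K\cap\Z^n}f(z)\,\chi_{z+Q}(x),\qquad \tilde g(y)=\sum_{w\in L\cap\Z^n}g(w)\,\chi_{w+Q}(y),
\]
\[
\tilde h(t)=\sum_{z\in M\cap\Z^n}h^{\symbol}(z)\,\chi_{z+Q}(t).
\]
These are measurable step functions (the sums are finite since $K,L,M$ are bounded), each cube has volume $1$, and therefore $\int\tilde f=\sum_{x\in K\cap\Z^n}f(x)$, with analogous identities for $\tilde g$ and $\tilde h$. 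Hence, applying Theorem~\ref{t:BBL_means} to the triple $(\tilde f,\tilde g,\tilde h)$ with parameter $p$ would yield \eqref{e: BBL discreta (caso p-media)} directly.

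The heart of the argument is then verifying the pointwise hypothesis $\tilde h((1-\lambda)x+\lambda y)\geq\Media{p}{\tilde f(x)}{\tilde g(y)}{\lambda}$ for all $x,y\in\R^n$. The inequality is trivial unless both $\tilde f(x)$ and $\tilde g(y)$ are positive, in which case $x\in z+Q$ and $y\in w+Q$ for some $z\in K\cap\Z^n$, $w\in L\cap\Z^n$. By convexity of $Q$, one has $(1-\lambda)(x-z)+\lambda(y-w)\in Q$, so $(1-\lambda)x+\lambda y\in(1-\lambda)z+\lambda w+Q$. Since $\{z'+Q\}_{z'\in\Z^n}$ is a partition, there is a unique $z_p\in\Z^n$ with $(1-\lambda)x+\lambda y\in z_p+Q$, and subtracting the two displays gives the key inclusion
\[
z_p-\bigl((1-\lambda)z+\lambda w\bigr)\in Q-Q=(-1,1)^n.
\]
This yields two things at once: first, $z_p\in(1-\lambda)z+\lambda w+(-1,1)^n\subset M$, so $\tilde h((1-\lambda)x+\lambda y)=h^{\symbol}(z_p)$; second, by the definition of the sup-convolution, $h^{\symbol}(z_p)\geq h((1-\lambda)z+\lambda w)\geq\Media{p}{f(z)}{g(w)}{\lambda}=\Media{p}{\tilde f(x)}{\tilde g(y)}{\lambda}$, where the last inequality uses the assumed hypothesis on $f,g,h$ (applicable because $z\in K$ and $w\in L$).

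I expect the main obstacle to be precisely this ``double use'' of the inclusion $Q-Q\subset(-1,1)^n$: the same inclusion simultaneously certifies that the approximating integer point $z_p$ lies in $M$ and that the shift needed to pass from $h$ at $(1-\lambda)z+\lambda w$ to $h^{\symbol}$ at $z_p$ is admissible. This is the structural reason why the $(-1,1)^n$-enlargement appears both inside $h^{\symbol}$ and in the definition of $M$, and it is what forces the choice of cube $Q$ of side $1$ as the fundamental domain. Once the pointwise hypothesis is in place, the proof concludes by plugging $(\tilde f,\tilde g,\tilde h)$ into Theorem~\ref{t:BBL_means} and identifying the resulting integrals with the sums in \eqref{e: BBL discreta (caso p-media)}.
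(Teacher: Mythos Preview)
Your argument is correct and takes a genuinely different route from the paper. The paper proceeds by induction on the dimension: it first establishes the one-dimensional case via a direct lattice-point counting argument (Lemmas~\ref{l: G(M)+Z(M)>= (1-lam)G(K)+lamG(L)}, \ref{l: 1-dim_BM_G}, \ref{l: BBL Paso1 (caso p-media) new}) and then, for $n>1$, slices $K$, $L$ and $M$ by hyperplanes orthogonal to $\e_n$, applies the inductive hypothesis to the sections, and finishes with the $1$-dimensional case applied to the resulting section-sum functions. Your approach instead reduces Theorem~\ref{t: BBL discreta (caso p-media)} \emph{directly} to the continuous Borell--Brascamp--Lieb inequality (Theorem~\ref{t:BBL_means}) by placing each discrete value on a unit cube of a fundamental domain; the observation $Q-Q=(-1,1)^n$ is exactly what makes the enlargement and the sup-convolution $h^{\symbol}$ absorb the rounding, and your ``double use'' of that inclusion is precisely the point. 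Your proof is shorter and more conceptual, and in particular it bypasses all of the auxiliary one-dimensional machinery. What the paper's longer route buys is independence from the continuous inequality: since the paper later shows (Theorem just after Corollary~\ref{c: BM_lattice_point}) that the discrete inequality \eqref{e: BBL discreta (caso p-media)} \emph{implies} the continuous one \eqref{e:BBL_means}, a self-contained proof of the discrete statement avoids any hint of circularity and makes the two results genuinely equivalent rather than the discrete one being a corollary.
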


Let $\mathcal{B}=\{v_1\dots,v_n\}$ be a basis of an
$n$-dimensional lattice $\Lambda\subset\R^n$.
Under the same assumptions of the above result, we may consider the auxiliary functions
$f_{\mathcal{B}}, g_{\mathcal{B}}, h_{\mathcal{B}}: \R^n \rightarrow \R_{\geq0}$ defined by
\[ f_{\mathcal{B}}(x) = f(\varphi(x)), \quad g_{\mathcal{B}}(x) = g(\varphi(x))\text{ and } \quad h_{\mathcal{B}}(x) = h(\varphi(x)),\]
for all $x\in \R^n$, where $\varphi:\R^n\rightarrow\R^n$ is the linear (bijective) map given by $\varphi(x)=\sum_{i = 1}^n x_i v_i$ for any $x=(x_1,\dots,x_n)\in\R^n$. Thus, as an immediate consequence of the previous result (applied to the functions $f_{\mathcal{B}}, g_{\mathcal{B}}$ and $h_{\mathcal{B}}$), we get that
\begin{equation*}\label{e: BBL discrete (lattices)}
\sum_{z \in M\cap\Lambda} h^{\symbol_{_\mathcal{B}}}(z) \geq \Media{\frac{p}{np+1}}{\sum_{x \in K\cap\Lambda} f(x)}{\sum_{y \in L\cap\Lambda} g(y)}{\lambda},
\end{equation*}
where $M = (1-\lambda)K+\lambda L + \varphi((-1,1)^n)$ and $h^{\symbol_{_\mathcal{B}}}(z) = \sup_{u\in \varphi((-1,1)^n)} h(z+u)$ for all $z\in\R^n$.
So, Theorem \ref{t: BBL discreta (caso p-media)} also holds in the setting of an $n$-dimensional lattice $\Lambda\subset\R^n$.	

\smallskip

We would like to point out that Theorem \ref{t:BBL_means} admits an equivalent version for $p$-sums (instead of $p$-means). In this regard, in \cite{IYN} a discrete version of the Borell-Brascamp-Lieb inequality for $p$-sums was provided. However, in contrast to the continuous setting, where one may directly obtain Theorem \ref{t:BBL_means} from the corresponding version for $p$-sums (and vice versa) because
of the homogeneity of the volume, one cannot expect to derive in a similar way a discrete version of Theorem \ref{t:BBL_means} (like Theorem \ref{t: BBL discreta (caso p-media)}) from \cite[Theorem~2.1]{IYN}.

\smallskip

In order to prove Theorem \ref{t: BBL discreta (caso p-media)}, we state an auxiliary result that will allow us to get the one-dimensional case of the above-mentioned Brunn-Minkowski type inequality for the lattice point enumerator (cf. Question \ref{q: BM_G}).
\begin{lemma}\label{l: G(M)+Z(M)>= (1-lam)G(K)+lamG(L)}
Let $\lambda \in (0,1)$ and let $K, L, M \subset \R$ be non-empty sets such that $(1-\lambda)K + \lambda L \subset M$. If $M = \bigcup_{i=1}^s [a_i, b_i]$ is a finite union of pairwise disjoint compact intervals then
\begin{equation}\label{e: G(M)+Z(M)>= (1-lam)G(K)+lamG(L)}
\G{1}(M) + \fun(M) \geq (1-\lambda) \G{1}(K) + \lambda \G{1}(L),
\end{equation}
where $\fun(M)$ denotes the number of non-integer extreme points of $M$, namely
\[ \fun(M) = | \{ a_i \notin \Z: 1\leq i \leq s\} | + | \{ b_i \notin \Z: 1\leq i \leq s \} |. \]
\end{lemma}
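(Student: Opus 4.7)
The plan is to reduce to the classical one-dimensional Brunn--Minkowski inequality via a ``thickening'' that inflates integer points to unit half-open intervals, and then separately bound the resulting continuous volume from above by $\G{1}(M)+\fun(M)$.

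First I would set $\bar{K} := (K\cap\Z) + [0,1)$ and $\bar{L} := (L\cap\Z) + [0,1)$. Since distinct integers yield disjoint half-open unit intervals, $\vol(\bar{K}) = \G{1}(K)$ and $\vol(\bar{L}) = \G{1}(L)$. Using $(1-\lambda)[0,1) + \lambda[0,1) = [0,1)$ together with the hypothesis $(1-\lambda)K+\lambda L\subset M$ and the inclusions $K\cap\Z\subset K$, $L\cap\Z\subset L$, one checks
\[ (1-\lambda)\bar{K} + \lambda \bar{L} \;\subset\; M + [0,1). \]
The one-dimensional Brunn--Minkowski inequality applied to $\bar{K}$ and $\bar{L}$ then yields
\[ \vol\bigl(M+[0,1)\bigr) \;\geq\; (1-\lambda)\G{1}(K) + \lambda\G{1}(L). \]

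To control $\vol(M+[0,1))$ by $\G{1}(M)+\fun(M)$, I would write $M + [0,1) = \bigcup_{i=1}^s [a_i,b_i+1)$ and use subadditivity of Lebesgue measure to obtain $\vol(M+[0,1)) \leq \sum_{i=1}^s (b_i - a_i + 1) = \vol(M) + s$. Then a short case analysis according to whether each $a_i$ or $b_i$ is an integer shows that
\[ \G{1}([a_i,b_i]) + \fun([a_i,b_i]) \;=\; \lceil b_i \rceil - \lfloor a_i\rfloor + 1 \;\geq\; b_i - a_i + 1, \]
so summing over $i$ gives $\G{1}(M) + \fun(M) \geq \vol(M)+s \geq \vol(M+[0,1))$. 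Chaining this with the previous bound completes the argument.

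The main (minor) obstacle is the degenerate case in which $K\cap\Z$ or $L\cap\Z$ is empty, so that one of the thickened sets vanishes and the Brunn--Minkowski step becomes vacuous. I would patch this by fixing any $k_0\in K$ (which exists as $K\neq\emptyset$) and replacing $(1-\lambda)\bar{K}$ by $(1-\lambda)\{k_0\}$: the translated set $\lambda\bar{L} + (1-\lambda)k_0$ still lies in $M+[0,1)$ and has volume $\lambda\G{1}(L) = (1-\lambda)\G{1}(K) + \lambda\G{1}(L)$, producing the same conclusion. The symmetric case and the trivial case $K\cap\Z = L\cap\Z = \emptyset$ are handled similarly.
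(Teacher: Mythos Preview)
Your argument is correct and quite elegant, but it follows a genuinely different route from the paper's proof. The paper proceeds by induction on the number $s$ of intervals composing $M$: for $s=1$ it compares $\lfloor b_1\rfloor-\lceil a_1\rceil+1$ directly with the corresponding quantities for $K$ and $L$; for $s>1$ it locates the supremum $m$ of $((1-\lambda)K+\lambda L)\cap[a_1,b_1]$, splits $K$ and $L$ at suitable points $k,l$ with $(1-\lambda)k+\lambda l=m$, and applies the induction hypothesis to each piece. No use is made of the continuous Brunn--Minkowski inequality.

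Your approach, by contrast, reduces the lemma to the classical one-dimensional Brunn--Minkowski inequality via the thickening $\bar K=(K\cap\Z)+[0,1)$, exploiting crucially that $(1-\lambda)[0,1)+\lambda[0,1)=[0,1)$. This is shorter and conceptually clean; the endpoint bound $\G{1}([a_i,b_i])+\fun([a_i,b_i])=\lceil b_i\rceil-\lfloor a_i\rfloor+1\geq b_i-a_i+1$ is exactly what the paper's base case computes, so the real difference is that you replace the inductive splitting by an appeal to the continuous inequality. One trade-off worth noting: the paper later shows that its discrete inequalities \emph{imply} the classical Brunn--Minkowski and Borell--Brascamp--Lieb inequalities, so its self-contained argument avoids any circularity in that direction, whereas your proof imports the continuous result as input. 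Also, the paper's inductive scheme is reused almost verbatim for the variant in Lemma~\ref{l: G(alpha K+beta L+[-(q-1)/q,(q-1)/q])} with coefficients $\alpha+\beta\leq1$; your thickening trick would need adjustment there, since the identity $(1-\lambda)[0,1)+\lambda[0,1)=[0,1)$ depends on the coefficients summing to~$1$.
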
	
\begin{proof}
We show the result by induction on the number of intervals $s$ of $M$. For the case $s=1$, i.e., when
$M = [a_1, b_1]$ is a (non-empty) compact interval, we have on the one hand that
$\G{1}(M) = \floor{b_1} - \ceil{a_1} +1$.
Moreover, denoting by $a = \inf K$, $b = \sup K$, $c = \inf L$ and $d = \sup L$, we clearly get
$\G{1}(K)\leq \G{1}([a,b]) = \floor{b} - \ceil{a} +1$ and $\G{1}(L) \leq \G{1}([c,d]) = \floor{d} - \ceil{c} +1$.
On the other hand, the inclusion $(1-\lambda)K + \lambda L \subset M$ implies that
\begin{equation*}
\floor{b_1} \geq b_1 - \chi_{_{\R\setminus\Z}}(b_1)
\geq (1-\lambda)\floor{b} + \lambda\floor{d} - \chi_{_{\R\setminus\Z}}(b_1)
\end{equation*}
and
\begin{equation*}
-\ceil{a_1} \geq -a_1 - \chi_{_{\R\setminus\Z}}(a_1)
\geq -(1-\lambda)\ceil{a} - \lambda \ceil{c} - \chi_{_{\R\setminus\Z}}(a_1),
\end{equation*}
and thus
\begin{equation*}
\floor{b_1} - \ceil{a_1} +1
\geq (1-\lambda)(\floor{b}-\ceil{a} + 1) + \lambda(\floor{d}-\ceil{c}+1)-\fun(M).
\end{equation*}
This, together with the above upper bounds for the lattice point enumerator $G_1$ of $K$ and $L$, yields
$\G{1}(M)\geq (1-\lambda)\G{1}(K) + \lambda \G{1}(L)-\fun(M)$, which shows the case $s=1$.

So, we suppose that the inequality is true for $s\geq1$ and assume that
$M = \bigcup_{i=1}^{s+1} [a_i, b_i]$, where $b_i < a_{i+1}$ for all $1\leq i\leq s$.

Denoting by $M_1 = [a_1, b_1]$ and $M_2 = \bigcup_{i=2}^{s+1} [a_i, b_i]$, we may assume, without loss of generality, that $M_1\cap ((1-\lambda)K + \lambda L)\neq\emptyset$ (otherwise, the result follows directly from the induction hypothesis applied to the sets $K$, $L$ and $M_2$). Hence, we may define $m = \sup \bigl(M_1\cap((1-\lambda)K + \lambda L)\bigr)$ and then, since $K$ and $L$ are bounded (because $(1-\lambda)K + \lambda L\subset M$), there exist $k \in \cl K$ and $l \in \cl L$ such that $(1-\lambda)k + \lambda l = m$.
Thus, considering the sets $K_1 = \{x \in K: x \leq k\}$, $K_2 = K \setminus K_1$, $L_1 = \{x \in L: x \leq l\}$ and $L_2 = L \setminus L_1$, we clearly have that $(1-\lambda)K_1 + \lambda L_1 \subset M_1$ and $(1-\lambda)K_2 + \lambda L_2 \subset M_2$. Therefore, applying the induction hypothesis (and taking into account that $M_1$ are $M_2$ are disjoint), we get
\begin{equation*}
\begin{split}
\G{1}(M) + \fun(M) &= \G{1}(M_1) + \G{1}(M_2) + \fun(M_1) + \fun(M_2) \\
&\geq (1-\lambda)\bigl(\G{1}(K_1)+\G{1}(K_2)\bigr) + \lambda\bigl(\G{1}(L_1)+\G{1}(L_2)\bigr)\\
&= (1-\lambda) \G{1}(K) + \lambda \G{1}(L),
\end{split}
\end{equation*}
as desired.
\end{proof}

\begin{remark}
One might think that if $(1-\lambda)K + \lambda L$ is itself a finite union of (pairwise disjoint) compact intervals, the set $M = (1-\lambda)K + \lambda L$ would yield a tighter inequality in
\eqref{e: G(M)+Z(M)>= (1-lam)G(K)+lamG(L)} than other greater sets (with respect to set inclusion). Nevertheless, this is not true in general: if we consider $K = [-2m, -1] \cup [1,2m]$ with $m \in \N$, $L = \{0\}$ and $\lambda = 1/2$, then for $M = (1-\lambda)K + \lambda L=[-m, -1/2] \cup [1/2, m]$ we get $\G{1}(M) + \fun(M) = 2m + 2$ whereas for $M' = [-m, m]$ we have
$\G{1}(M') + \fun(M') = 2m+1$.
\end{remark}

We notice that, as shown in \eqref{e:counter_BM_G}, the quantity $\fun(M)$ cannot be (in general) omitted.
However, we can rewrite \eqref{e: G(M)+Z(M)>= (1-lam)G(K)+lamG(L)} to provide an answer to Question \ref{q: BM_G} for $n=1$, which is the one-dimensional case of Theorem \ref{t: BM_lattice_point_no_G(K)G(L)>0}:
\begin{lemma}\label{l: 1-dim_BM_G}
Let $\lambda \in (0,1)$ and let $K, L \subset \R$ be non-empty bounded sets. Then
\begin{equation}\label{e: 1-dim_BM_G}
\G{1}\bigl((1-\lambda)K + \lambda L + (-1,1)\bigr) \geq (1-\lambda) \G{1}(K) + \lambda \G{1}(L).
\end{equation}
The inequality is sharp.
\end{lemma}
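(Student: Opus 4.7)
The plan is to reduce the statement to Lemma \ref{l: G(M)+Z(M)>= (1-lam)G(K)+lamG(L)} by producing an auxiliary set $M$ sitting between $(1-\lambda)K+\lambda L$ and $(1-\lambda)K+\lambda L+(-1,1)$, chosen as a finite union of pairwise disjoint compact intervals whose endpoints are all integers. With integer endpoints one has $\fun(M)=0$, and the whole strategy amounts to arranging $\G{1}(M)=\G{1}\bigl((1-\lambda)K+\lambda L+(-1,1)\bigr)$; the conclusion of Lemma \ref{l: G(M)+Z(M)>= (1-lam)G(K)+lamG(L)} then becomes exactly \eqref{e: 1-dim_BM_G}.

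Concretely, I would set $C=(1-\lambda)K+\lambda L$ and consider the finite set $I=\bigl(C+(-1,1)\bigr)\cap\Z$; observe that $z\in I$ if and only if $C\cap(z-1,z+1)\neq\emptyset$. Writing $I=\{z_1<z_2<\cdots<z_N\}$, I would partition it into its maximal blocks of consecutive integers $\{z_j,z_{j+1},\dots,z_k\}$ and define
\[ M=\bigcup_{\text{blocks}}[z_j,z_k]. \]
This is a finite union of pairwise disjoint compact intervals (consecutive blocks in $I$ are separated by a gap of at least $2$), every endpoint is an integer so $\fun(M)=0$, and $\G{1}(M)=|I|=\G{1}\bigl(C+(-1,1)\bigr)$.

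The step to verify — and where I expect the main obstacle to lie — is the inclusion $C\subset M$, which is what makes Lemma \ref{l: G(M)+Z(M)>= (1-lam)G(K)+lamG(L)} applicable. Given any $c\in C$, some integer $z$ satisfies $c\in(z-1,z+1)$, hence $z\in I$; let $\{z_j,\dots,z_k\}$ be the block containing $z$. By maximality $z_j-1\notin I$, so $C$ is disjoint from $(z_j-2,z_j)$, which forces $c\geq z_j$; the symmetric argument at $z_k$ gives $c\leq z_k$. Therefore $c\in[z_j,z_k]\subset M$.

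With $(1-\lambda)K+\lambda L\subset M$ in hand, Lemma \ref{l: G(M)+Z(M)>= (1-lam)G(K)+lamG(L)} immediately yields
\[ \G{1}\bigl((1-\lambda)K+\lambda L+(-1,1)\bigr)=\G{1}(M)+\fun(M)\geq (1-\lambda)\G{1}(K)+\lambda\G{1}(L), \]
which is \eqref{e: 1-dim_BM_G}. Sharpness is witnessed by $K=L=[0,m]$ with $m\in\N$, for which both sides equal $m+1$.
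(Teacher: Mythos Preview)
Your proof is correct and follows essentially the same strategy as the paper: construct a finite union $M$ of compact intervals with integer endpoints (so $\fun(M)=0$) that contains $(1-\lambda)K+\lambda L$ and captures exactly the integer points of $(1-\lambda)K+\lambda L+(-1,1)$, then invoke Lemma~\ref{l: G(M)+Z(M)>= (1-lam)G(K)+lamG(L)}. The paper's concrete choice is $M=\bigcup_{x\in(1-\lambda)K+\lambda L}[\lfloor x\rfloor,\lceil x\rceil]$, which differs from your block construction only cosmetically (your $M$ may merge some of the paper's adjacent intervals), and the sharpness example is identical.
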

\begin{proof}
Let $M = \bigcup_{x \in(1-\lambda)K + \lambda L} [\floor{x}, \ceil{x}]$. Clearly, $M$ is a finite union of compact intervals (since $K$ and $L$ are bounded) containing $(1-\lambda)K + \lambda L$. From Lemma
\ref{l: G(M)+Z(M)>= (1-lam)G(K)+lamG(L)} we then obtain $\G{1}(M)+\fun(M)\geq (1-\lambda) \G{1}(K) + \lambda \G{1}(L)$, which, together with the facts that $\fun(M) = 0$ and
$M \cap \Z =\left((1-\lambda)K + \lambda L + (-1,1)\right) \cap \Z$, yields \eqref{e: 1-dim_BM_G}.
		
Finally, in order to show that equality may be attained, we consider $K = L = [0,m]$ with $m \in \N$, for which $\G{1}\bigl((1-\lambda)K + \lambda L+(-1,1)\bigr) = m+1 = (1-\lambda) \G{1}(K) + \lambda \G{1}(L)$ for all $\lambda\in(0,1)$.
\end{proof}

\begin{remark}\label{r:best_interval}
Since both sides of \eqref{e: 1-dim_BM_G} are invariant under translations by integers of the sets $K$ and $L$,
we may replace $(-1,1)$ (in \eqref{e: 1-dim_BM_G}) by any other interval $(m,m+2)$, with $m\in\Z$.

We note however that the solution to Question \ref{q: BM_G} provided by Lemma \ref{l: 1-dim_BM_G} (for $n=1$), via
$M=(1-\lambda)K + \lambda L+(-1,1)$, for all $\lambda\in(0,1)$, cannot be in general improved by means of any other interval strictly contained in $(-1,1)$.
Indeed, by considering $I=[-a,1)$, with $-1<-a<0$, and taking $K=[-1,0]$, $L=[-2,0]$ and
$\lambda\in(0,1)$ such that $\lambda+a<1$, we get that
\[
\begin{split}
\G{1}\bigl((1-\lambda)K + \lambda L + I\bigr)&=\G{1}\bigl([-1-\lambda-a,1)\bigr)=2\\
&<2+\lambda=(1-\lambda) \G{1}(K) + \lambda \G{1}(L).
\end{split}
\]
The case $I=(-1,a]$, for $0<a<1$, is completely analogous and thus, no interval smaller than $(-1,1)$ (with respect to set inclusion) can be taken into account.
\end{remark}

\medskip

Now we state some auxiliary results. The following lemma can be regarded as a discrete counterpart of the well-known \emph{Cavalieri Principle} (see \cite[Lemma~3.1]{IYN}).

\begin{lemma}[\cite{IYN}]\label{l: Fubini discreto}
Let $\Omega \subset \R^n$ be a finite set, let $f:\Omega\longrightarrow\R_{\geq 0}$ and suppose $f(\Omega)\subset\{k_0, k_1, \dots, k_r\}$ where $0=k_0<k_1<\dots<k_r$.
Then
\begin{equation*}\label{e: Fubini discreto}
\sum_{x\in\Omega} f(x) = \sum_{i=1}^r (k_i - k_{i-1})\Bigl|\{ x \in \Omega : f(x) \geq k_i \}\Bigr|
=\int_0^{\infty} \Bigl|\{ x \in \Omega : f(x) \geq t \}\Bigr| \, \dlat t.
\end{equation*}
\end{lemma}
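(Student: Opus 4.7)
The plan is to establish both equalities via the standard layer-cake (Cavalieri) decomposition of $f$ through its superlevel sets. The two identities are the discrete and continuous instantiations of the same bookkeeping, so I would prove them separately but with the same underlying idea: writing either $f(x)$ or the integral of $\phi(t) := |\{x\in\Omega:f(x)\geq t\}|$ as a weighted count of how many of the sets $\{f\geq k_i\}$ contain a given point.

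For the first equality, the key is the pointwise telescoping identity
\[
f(x) \;=\; \sum_{i=1}^{r}(k_i-k_{i-1})\,\chi_{\{f\geq k_i\}}(x) \qquad \text{for every } x\in\Omega.
\]
Indeed, if $f(x)=k_j$ then $x$ lies in $\{f\geq k_i\}$ exactly when $i\leq j$, so the right-hand side collapses to $\sum_{i=1}^{j}(k_i-k_{i-1}) = k_j-k_0 = k_j$ (using $k_0=0$). Summing this identity over $x\in\Omega$ and swapping the two finite sums produces
\[
\sum_{x\in\Omega}f(x) = \sum_{i=1}^{r}(k_i-k_{i-1})\,\bigl|\{x\in\Omega:f(x)\geq k_i\}\bigr|,
\]
which is the first asserted equality.

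For the second equality, observe that $\phi(t) = |\{x\in\Omega:f(x)\geq t\}|$ is a non-increasing step function. Since $f$ takes only the values $k_0,\ldots,k_r$, the superlevel set $\{f\geq t\}$ is constant on each half-open interval $(k_{i-1},k_i]$ and equals $\{f\geq k_i\}$ there, while $\phi(t)=0$ for $t>k_r$. Decomposing $(0,\infty)$ accordingly,
\[
\int_0^\infty \phi(t)\,\dlat t = \sum_{i=1}^{r}\int_{k_{i-1}}^{k_i}\phi(t)\,\dlat t = \sum_{i=1}^{r}(k_i-k_{i-1})\,\bigl|\{x\in\Omega:f(x)\geq k_i\}\bigr|,
\]
which matches the middle term of the statement.

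There is no real obstacle here beyond careful bookkeeping at the breakpoints $k_i$; since $\phi$ is modified at only finitely many points when one switches between $\{f\geq t\}$ and $\{f>t\}$, the value of the integral is unaffected by those conventions. The finiteness of $\Omega$ (and hence of $f(\Omega)$) ensures all sums and integrals involved are finite, so no convergence issues arise.
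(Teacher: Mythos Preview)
Your argument is correct: the telescoping identity $f(x)=\sum_{i=1}^{r}(k_i-k_{i-1})\chi_{\{f\ge k_i\}}(x)$ followed by interchanging finite sums gives the first equality, and the piecewise-constant structure of $t\mapsto|\{f\ge t\}|$ on the intervals $(k_{i-1},k_i]$ gives the second. Note that the paper does not supply its own proof of this lemma; it is quoted from \cite[Lemma~3.1]{IYN}, so there is no in-paper argument to compare against, but your proof is the standard Cavalieri/layer-cake computation one would expect.
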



\begin{corollary}\label{c: Fubini discreto (lattice point enumerator)}
Let $\Omega\subset\R^n$ be a bounded set, let $f:\R^n\longrightarrow\R_{\geq 0}$ and suppose $f(\Omega\cap\Z^n)\subset\{k_0, k_1, \dots, k_r\}$ where $0=k_0<k_1<\dots<k_r$.
Then
\begin{equation*}
\sum_{x\in\Omega\cap\Z^n} f(x) = \sum_{i=1}^r (k_i - k_{i-1})\G{n}\bigl(\{ x \in \Omega : f(x) \geq k_i \}\bigr).
\end{equation*}
\end{corollary}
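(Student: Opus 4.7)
The plan is to derive this corollary as a direct application of Lemma \ref{l: Fubini discreto} to the finite set $\Omega\cap\Z^n$, translating the result from the cardinality $|\cdot|$ into the lattice point enumerator $\G{n}$.

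First I would invoke the boundedness of $\Omega$ to observe that $\Omega\cap\Z^n$ is a finite subset of $\R^n$, so Lemma \ref{l: Fubini discreto} applies verbatim to the restriction $f|_{\Omega\cap\Z^n}$, whose range is contained in $\{k_0,k_1,\dots,k_r\}$ by hypothesis. This gives immediately
\[
\sum_{x\in\Omega\cap\Z^n} f(x) = \sum_{i=1}^r (k_i-k_{i-1})\bigl|\{x\in\Omega\cap\Z^n : f(x)\geq k_i\}\bigr|.
\]

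Next I would identify, for each $i\in\{1,\dots,r\}$, the level set $\{x\in\Omega\cap\Z^n : f(x)\geq k_i\}$ with $\{x\in\Omega : f(x)\geq k_i\}\cap\Z^n$; these two sets are literally equal since the condition $x\in\Z^n$ can be imposed either as part of the domain or a posteriori. Consequently their cardinality equals $\G{n}\bigl(\{x\in\Omega : f(x)\geq k_i\}\bigr)$ by the very definition of the lattice point enumerator. Substituting this identity into the formula above yields the claimed equality.

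There is essentially no obstacle: the corollary is just a reformulation of Lemma \ref{l: Fubini discreto} for bounded (as opposed to finite) domains, and the only point that needs any attention is noting that boundedness makes $\Omega\cap\Z^n$ finite so that the hypotheses of Lemma \ref{l: Fubini discreto} are genuinely satisfied.
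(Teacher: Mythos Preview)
Your proposal is correct and matches the paper's intent: the paper states this result as an immediate corollary of Lemma~\ref{l: Fubini discreto} without giving any separate proof, and your argument spells out exactly the obvious translation (finiteness of $\Omega\cap\Z^n$ from boundedness, then rewriting $|\cdot|$ as $\G{n}$).
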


\medskip

The following result yields the case $n = 1$ of Theorem \ref{t: BBL discreta (caso p-media)} and
will be used to derive \eqref{e: BBL discreta (caso p-media)}.
\begin{lemma}\label{l: BBL Paso1 (caso p-media) new}
Let $\lambda \in (0,1)$ and let $\Omega_1, \Omega_2 \subset \R$ be non-empty bounded sets. Let $-1\leq p\leq\infty$ and let $f,g,h:\R \longrightarrow \R_{\geq 0}$ be functions such that
\begin{equation*}
h((1-\lambda)x + \lambda y)\geq\Media{p}{f(x)}{g(y)}{\lambda}
\end{equation*}
for all $x\in\Omega_1$, $y\in\Omega_2$. Then
\begin{equation*}
\sum_{z \in \Omega \cap \Z}h^\symbol(z) \geq \Media{\frac{p}{p+1}}{\sum_{x \in \Omega_1 \cap \Z}f(x)}{\sum_{y \in \Omega_2\cap \Z}g(y)}{\lambda},
\end{equation*}
where $\Omega = (1-\lambda)\Omega_1 + \lambda \Omega_2 + (-1,1)$.
\end{lemma}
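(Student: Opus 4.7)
The plan is to reduce the claim to the one-dimensional continuous Borell-Brascamp-Lieb inequality (Theorem~\ref{t:BBL_means}) via a \emph{double} use of Cavalieri's principle: first applied to the values of $f, g, h^\symbol$, and then to the resulting lattice-point-count functions, converting our $p$-mean condition on arguments into a classical BBL-type $p$-mean condition on values.

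First I would set $A_s = \{x \in \Omega_1: f(x) \geq s\}$, $B_t = \{y \in \Omega_2: g(y) \geq t\}$, $C_r = \{z \in \Omega: h^\symbol(z) \geq r\}$ and $\phi(s) = \G{1}(A_s)$, $\psi(t) = \G{1}(B_t)$, $\chi(r) = \G{1}(C_r)$. Corollary~\ref{c: Fubini discreto (lattice point enumerator)} then rewrites the three sums appearing in the statement as $\int_0^\infty \phi$, $\int_0^\infty \psi$ and $\int_0^\infty \chi$, respectively. For $s, t > 0$ with $A_s, B_t \neq \emptyset$, the hypothesis together with the monotonicity of $\Media{p}{\cdot}{\cdot}{\lambda}$ and the definition of $h^\symbol$ yields the inclusion $(1-\lambda) A_s + \lambda B_t + (-1, 1) \subset C_{\Media{p}{s}{t}{\lambda}}$; applying Lemma~\ref{l: 1-dim_BM_G} to $A_s, B_t$ (bounded since $\Omega_1, \Omega_2$ are) then gives
\[\chi\bigl(\Media{p}{s}{t}{\lambda}\bigr) \geq (1-\lambda) \phi(s) + \lambda \psi(t).\]

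Next I would invert by introducing the distribution-style functions of the non-increasing $\phi, \psi, \chi$: set $S(u) = |\{s \geq 0: \phi(s) \geq u\}|$ and analogously $T, R$, so that a second Cavalieri gives $\int_0^\infty S = \int_0^\infty \phi$, and similarly for $T, R$. Taking suprema in $s, t$ at fixed levels $u_1, u_2 > 0$ in the previous inequality, and using the monotonicity of $R$ and of $\Media{p}{\cdot}{\cdot}{\lambda}$, yields the classical BBL-type condition
\[R\bigl((1-\lambda) u_1 + \lambda u_2\bigr) \geq \Media{p}{S(u_1)}{T(u_2)}{\lambda},\]
which, after extending $S, T, R$ by $0$ to the negative reals and invoking the convention $\Media{p}{a}{0}{\lambda} = 0$, holds for every $u_1, u_2 \in \R$. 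Applying Theorem~\ref{t:BBL_means} in dimension one to $S, T, R$ then produces $\int_\R R \geq \Media{p/(p+1)}{\int_\R S}{\int_\R T}{\lambda}$, which via the first step is exactly the claim.

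The delicate point, which I expect to be the main obstacle, is the inversion step: one must verify carefully that the sup-in-$s, t$ passes through $\Media{p}{\cdot}{\cdot}{\lambda}$ on $[0, \infty)$, exploiting both monotonicity of the $p$-mean and the one-sided continuity of the step functions $\phi, \psi, \chi$, and that the support boundary of $S$ and $T$ is correctly absorbed by the vanishing convention of the $p$-mean at $0$. The constraint $p \geq -1$ in the statement matches exactly the hypothesis $p \geq -1/n$ of Theorem~\ref{t:BBL_means} in dimension $n = 1$, so no further restriction enters.
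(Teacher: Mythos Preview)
Your argument is correct, and the inversion step does go through cleanly: since each $\phi,\psi$ is a finite sum of indicators $\mathbb{1}_{(-\infty,f(x)]}$ it is left-continuous, so $\{s\geq 0:\phi(s)\geq u_1\}=[0,S(u_1)]$ is closed and the supremum is attained at $s_0=S(u_1)$ (and likewise $t_0=T(u_2)$); then $\chi\bigl(\Media{p}{s_0}{t_0}{\lambda}\bigr)\geq(1-\lambda)u_1+\lambda u_2$ gives $R((1-\lambda)u_1+\lambda u_2)\geq \Media{p}{S(u_1)}{T(u_2)}{\lambda}$ directly, and the zero-convention handles the remaining cases.

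However, your route is genuinely different from the paper's. The paper does \emph{not} invoke the continuous Borell--Brascamp--Lieb inequality at all: instead it normalizes $f,g$ by their maxima on $\Omega_1\cap\Z,\Omega_2\cap\Z$, which collapses the $p$-mean hypothesis to the $p=-\infty$ condition $H((1-\lambda)x+\lambda y)\geq\min\{F(x),G(y)\}$. This lets one use \emph{same-level} superlevel sets and a single Cavalieri pass, after which the $p$-dependence reappears only at the very end via the reverse H\"older inequality. Your approach keeps the levels $s,t$ independent, performs a second Cavalieri on the counting functions $\phi,\psi,\chi$, and offloads the $p$-arithmetic to Theorem~\ref{t:BBL_means} as a black box. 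What you gain is a uniform, case-free treatment of all $p$ and a transparent reduction to the classical result; what the paper gains is a fully self-contained argument, which matters because a later theorem in the paper shows that the discrete inequality \emph{implies} the continuous one --- with your proof that implication would still be valid, but the pair of results would then merely establish an equivalence rather than an independent discrete inequality.
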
	
\begin{proof}
Clearly, we may assume that $\sum_{x \in \Omega_1 \cap \Z}f(x), \sum_{y \in \Omega_2 \cap \Z}g(y)>0$.
We consider the functions $F, G, H, H^\symbol:\R\longrightarrow\R_{\geq0}$ given by
\[ F(x) = \frac{f(x)}{a}, \quad G(y) = \frac{g(y)}{b}, \quad H(z) = \frac{h(z)}{c_p}, \quad H^\symbol(z)=  \frac{h^\symbol(z)}{c_p},\]
where
\[ a = \max_{x \in \Omega_1\cap\Z} f(x)>0, \quad b=\max_{y \in \Omega_2\cap\Z} g(y)>0 \quad \text{and} \quad c_p= \Media{p}{a}{b}{\lambda}>0.\]
Then \[\max_{x \in \Omega_1\cap\Z} F(x) = \max_{y \in \Omega_2\cap\Z} G(y)=1.\]
		
First, we show that, for any $x \in \Omega_1$, $y \in \Omega_2$, we have
\begin{equation}\label{e: H((1-lambda)x + lambda y) >= min(F(x), G(y)) new}
H((1-\lambda)x + \lambda y) \geq \min(F(x), G(y)).
\end{equation}
To this aim, it is enough to consider $x \in \Omega_1$, $y \in \Omega_2$ with $f(x)g(y)>0$. If $p\neq0$ and $p\neq\infty$, writing $\theta = \lambda b^p / c_p^p \in (0,1)$, we get
\begin{equation*}
\begin{split}
h((1-\lambda)x + \lambda y) &\geq \left((1-\lambda)f(x)^p + \lambda g(y)^p \right)^{1/p}\\
&= c_p \left( \frac{(1-\lambda)a^pF(x)^p + \lambda b^p G(y)^p}{c_p^p} \right)^{1/p}\\
&= c_p \bigl( (1-\theta)F(x)^p + \theta G(y)^p \bigr)^{1/p}\\
&\geq c_p \min\{F(x), G(y)\}.
\end{split}
\end{equation*}

For $p=0$, we have
\begin{equation*}
h((1-\lambda)x + \lambda y) \geq f(x)^{1-\lambda}g(y)^\lambda
= c_0F(x)^{1-\lambda}G(y)^\lambda\geq c_0 \min\{F(x), G(y)\}.
\end{equation*}

For $p=\infty$, $h((1-\lambda)x + \lambda y) \geq \max\{f(x), g(y)\} \geq c_{\infty} \min\{F(x), G(y)\}$ clearly holds. Therefore, we have shown \eqref{e: H((1-lambda)x + lambda y) >= min(F(x), G(y)) new}.
		
Next, note that the definition of $F$ and $G$ implies that the level sets
\[ \{ x \in \Omega_1: F(x) \geq t \}, \quad \{ y \in \Omega_2 : G(y) \geq t \} \]
are non-empty for any $t \in [0,1]$. Moreover, writing $\Omega_{\lambda}=(1-\lambda)\Omega_1 + \lambda\Omega_2$,
we deduce from \eqref{e: H((1-lambda)x + lambda y) >= min(F(x), G(y)) new} that
\[ \{ z \in \Omega_{\lambda} : H(z) \geq t \} \supset (1-\lambda)\{ x \in \Omega_1 : F(x) \geq t \} + \lambda\{ y \in \Omega_2 : G(y) \geq t \}\]
and thus, by Lemma \ref{l: 1-dim_BM_G}, we have
\begin{equation}\label{e: Inequality of level sets (1-dim) new}
\begin{split}
&\G{1}\bigl(\{ z \in \Omega_{\lambda}: H(z) \geq t \} + (-1,1)\bigr)\\
\geq (1-\lambda)&\G{1}\bigl(\{ x \in \Omega_1 : F(x) \geq t \}\bigr)
+ \lambda\G{1}\bigl(\{ y \in \Omega_2 : G(y) \geq t \}\bigr)
\end{split}
\end{equation}
for all $t \in [0,1]$.
		
Note that, since $H^\symbol(z+u) \geq H\bigl((z+u)-u\bigr)=H(z)$ for all $u \in (-1,1)$, we also have
\begin{equation}\label{e: level_sets_Hsymbol}
\{ z\in \Omega : H^\symbol(z)\geq t \} \supset \{ z \in \Omega_{\lambda} : H(z) \geq t \} + (-1,1).
\end{equation}
		
Finally, set $\{k_0,k_1,\dots,k_r\} \supset F\bigl(\Omega_1\cap\Z\bigr) \cup G\bigl(\Omega_2\cap\Z\bigr)\cup H^\symbol\big(\Omega\cap\Z\big)$, with $0=k_0<k_1<\dots<k_r$ where, for some $s\in\{1,\dots,r\}$,
\[k_s = \max_{x \in \Omega_1\cap\Z} F(x) = \max_{y \in \Omega_2\cap\Z} G(y)=1.\]
Then, by Corollary \ref{c: Fubini discreto (lattice point enumerator)}, \eqref{e: Inequality of level sets (1-dim) new} and \eqref{e: level_sets_Hsymbol}, we get
\begin{equation*}
\begin{split}
\sum_{z \in \Omega\cap\Z} h^\symbol(z) &= \sum_{z \in \Omega\cap\Z} c_p H^\symbol(z)
= c_p \sum_{i = 1}^r (k_i - k_{i-1}) \G{1}\bigl(\{ z \in \Omega : H^\symbol(z) \geq k_i \}\bigr)\\
&\geq c_p \sum_{i = 1}^s (k_i - k_{i-1}) \G{1}\bigl(\{ z \in \Omega : H^\symbol(z) \geq k_i \}\bigr)\\
&\geq c_p \sum_{i = 1}^s (k_i - k_{i-1}) \Bigl( (1-\lambda)\G{1}\bigl(\{ x \in \Omega_1: F(x) \geq k_i \}\bigr) \\
& \hspace*{3.6cm} +  \lambda\G{1}\bigl(\{ y \in \Omega_2: G(y) \geq k_i \}\bigr) \Bigr)\\
&= c_p \left( (1-\lambda)\sum_{x \in \Omega_1\cap \Z} F(x) + \lambda\sum_{y \in \Omega_2\cap \Z} G(y) \right)\\
&= c_p \left( \frac{1-\lambda}{a}\,\sum_{x \in\Omega_1\cap \Z} f(x) +
\frac{\lambda}{b}\,\sum_{y \in \Omega_2\cap \Z} g(y) \right)\\
&\geq \Media{\frac{p}{p+1}}{\sum_{x \in \Omega_1\cap \Z} f(x)}{\sum_{y \in\Omega_2\cap \Z} g(y)}{\lambda}.
\end{split}
\end{equation*}
If $p\neq0$ and $p\neq\infty$, the last inequality follows from the reverse H\"older inequality (see e.g. \cite[Theorem~1, page~178]{Bu}),
\[ a_1b_1 + a_2b_2 \geq \left(a_1^{-p} + a_2^{-p}\right)^{-1/p} \left( b_1^q + b_2^q \right)^{1/q},\]
where $q = p/(p + 1)$ is the H\"older conjugate of $(-p) \leq 1$, just by taking $a_1 = ((1-\lambda)^{1/p} a)^{-1}$, $a_2 = (\lambda^{1/p} b)^{-1}$, $b_1 = (1-\lambda)^{1/q}\sum_{x \in \Omega_1\cap\Z} f(x)$ and $b_2 = \lambda^{1/q}\sum_{y \in \Omega_2\cap\Z} g(y)$.

The case $p = 0$ follows from the Arithmetic-Geometric mean inequality, whereas the case $p=\infty$ is immediate,
since there the $(p/(p+1))$-mean coincides with the $1$-mean.
\end{proof}

We would like to introduce a few additional notations which will be used to prove Theorem \ref{t: BBL discreta (caso p-media)}: we write $M(t)=\{x\in \R^{n-1}: (x,t)\in M\}$ for the $(n-1)$-dimensional section at height $t\in \R$ (in the direction of $\e_n$) whereas $\pi_n(M)$ denotes the orthogonal projection of $M$ onto $\R\e_n$ (regarded as a subset of $\R$), namely $\pi_n(M) = \{t \in \R: M(t)\neq\emptyset \}$.

\begin{proof}[Proof of Theorem \ref{t: BBL discreta (caso p-media)}]
If $n=1$, the result follows immediately from Lemma \ref{l: BBL Paso1 (caso p-media) new}.
Now suppose that $n>1$ and assume that the theorem holds for dimension $n-1$. Let $t_K \in \pi_n(K)$, $t_L\in\pi_n(L)$ and set, for the sake of brevity, $t_\lambda=(1-\lambda)t_K+ \lambda t_L$. Moreover, we denote by $C_n=(-1,1)^n$, $C_{n-1}=(-1,1)^{n-1}\times\{0\}$,  $M_{n-1}=(1-\lambda)K(t_K) + \lambda L(t_L) + (-1,1)^{n-1}$ and $M_\lambda=(1-\lambda)K+ \lambda L$ (thus, $M=M_\lambda + (-1,1)^n$). Consider the functions $f_1, g_1, h_1:\R^{n-1}\longrightarrow\R_{\geq 0}$ given by
\[ f_1(x) = f(x,t_K), \quad g_1(x) = g(x,t_L),\]
\[ h_1(x) = h(x, t_\lambda)\]
for any $x\in \R^{n-1}$.
Since for all $x \in K(t_K)$, $y\in L(t_L)$ we have
\begin{equation*}\begin{split}
h_1((1-\lambda)x+\lambda y) &= h((1-\lambda)x+\lambda y,(1-\lambda)t_K+ \lambda t_L)\\
&\geq \Media{p}{f(x,t_K)}{g(y,t_L)}{\lambda} = \Media{p}{f_1(x)}{g_1(y)}{\lambda},
\end{split}\end{equation*}
we may assert that
\begin{equation*}
\sum_{z \in M_{n-1}\cap\Z^{n-1}} h_1^\symbol(z) \geq
\Media{\frac{p}{(n-1)p+1}}{\sum_{x \in K(t_K)\cap\Z^{n-1}} f_1(x)}{\sum_{y \in L(t_L)\cap\Z^{n-1}} g_1(y)}{\lambda}.
\end{equation*}
This, together with the fact that
\[ ((1-\lambda)K+\lambda L)((1-\lambda)t_K + \lambda t_L) \supset (1-\lambda) K(t_K) + \lambda L(t_L),\]
and hence $(M_\lambda+C_{n-1})(t_\lambda)\supset M_{n-1}$, yields, in terms of $f$, $g$ and $h$,
\begin{equation}\label{e: suma de secciones n-1 dimensionales (caso p-medias) new}
\begin{split}
&\sum_{z \in ((M_\lambda+C_{n-1})(t_\lambda))\cap\Z^{n-1}} h^{\symbol\symbol}(z,t_\lambda)\\
&\geq \Media{\frac{p}{(n-1)p+1}}{\sum_{x \in K(t_K)\cap\Z^{n-1}} f(x,t_K)}{\sum_{y \in L(t_L)\cap\Z^{n-1}} g(y,t_L)}{\lambda},
\end{split}
\end{equation}
where $h^{\symbol\symbol}:\R^n\longrightarrow\R_{\geq 0}$ is the function given by $h^{\symbol\symbol}(z) = \sup_{v\in C_{n-1}} h(z+v)$, for which we have $h^{\symbol\symbol}(x,t_\lambda) = h_1^\symbol(x)$ for all $x\in\R^{n-1}$.
		
Now, let $f_2, g_2, h_2:\R \longrightarrow \R_{\geq 0}$ be the functions defined by
\[ f_2(t) = \sum_{x \in K(t)\cap\Z^{n-1}} f(x,t),
\;\; g_2(t) = \sum_{y \in L(t)\cap\Z^{n-1}} g(y,t),\,\text{ and }\]
\[h_2(t) = \sum_{z \in ((M_\lambda+C_{n-1})(t))\cap\Z^{n-1}} h^{\symbol\symbol}(z,t). \]	
Hence, \eqref{e: suma de secciones n-1 dimensionales (caso p-medias) new} yields, in terms of $f_2$, $g_2$ and $h_2$,
\[ h_2((1-\lambda)t_K + \lambda t_L) \geq \Media{\frac{p}{(n-1)p+1}}{f_2(t_K)}{g_2(t_L)}{\lambda} \]
for any $t_K\in\pi_n(K)$, $t_L\in\pi_n(L)$, and thus we may use Lemma \ref{l: BBL Paso1 (caso p-media) new} with the sets $\pi_n(K)$, $\pi_n(L)$ and the functions $f_2$, $g_2$ and $h_2$ to obtain
\[ \sum_{t \in \Omega\cap\Z} h_2^{\symbol}(t) \geq \Media{\frac{p}{np+1}}{\sum_{t_K \in \pi_n(K)\cap\Z} f_2(t_K)}{\sum_{t_L \in \pi_n(L)\cap\Z} g(t_L)}{\lambda}, \]
where $\Omega = (1-\lambda)\pi_n(K) + \lambda \pi_n(L) + (-1,1)$. 
In the following we prove that $\sum_{t\in\Omega\cap\Z} h_2^\symbol(t) \leq \sum_{z \in M\cap\Z^n} h^\symbol(z)$, and hence the above inequality together with the relations
\[ \sum_{t_K\in\pi_n(K)\cap\Z} f_2(t_K) =  \sum_{x \in K\cap\Z^n} f(x), \]
\[ \sum_{t_L\in\pi_n(L)\cap\Z} g_2(t_L) = \sum_{y\in L\cap\Z^n} g(y), \]
shows the result.
Indeed, from the fact that $(u,-w)\in C_{n}$ for any $(u,0)\in C_{n-1}$ and $w\in(-1,1)$, we have $(M_\lambda+C_{n-1})(t+w)\subset M(t)$ for all $w\in(-1,1)$ and thus we get
\begin{equation*}
\begin{split}
\sum_{t\in\Omega\cap\Z} h_2^\symbol(t) &= \sum_{t\in\Omega\cap\Z} \, \sup_{w\in(-1,1)} h_2(t+w)\\
&= \sum_{t\in\Omega\cap\Z} \, \sup_{w\in(-1,1)} \sum_{x \in ((M_\lambda+C_{n-1})(t+w))\cap\Z^{n-1}} h^{\symbol\symbol}(x, t+w)\\
&\leq \sum_{t\in\Omega\cap\Z} \; \sum_{x \in M(t)\cap\Z^{n-1}} \sup_{w\in(-1,1)} h^{\symbol\symbol}(x,t+w)\\
&= \sum_{t\in\Omega\cap\Z} \; \sum_{x \in M(t)\cap\Z^{n-1}} \sup_{w\in(-1,1)} \sup_{v \in (-1,1)^{n-1}} h(x+v, t+w)\\
&= \sum_{z \in M\cap\Z^n} \sup_{u\in (-1,1)^n} h(z+u) = \sum_{z \in M\cap\Z^n} h^\symbol(z),
\end{split}
\end{equation*}
as claimed. This finishes the proof.
\end{proof}

\begin{corollary}\label{c: BM_lattice_point}
Let $\lambda \in (0,1)$ and let $K, L \subset \R^n$ be non-empty bounded sets. Let $-1/n\leq p\leq\infty$.
Then
\begin{equation}\label{e: BM_lattice_point_general}
\G{n}\bigl((1-\lambda)K + \lambda L + (-1,1)^n\bigr)\geq \Media{\frac{p}{np+1}}{\G{n}(K)}{\G{n}(L)}{\lambda}.
\end{equation}
The inequality is sharp.
\end{corollary}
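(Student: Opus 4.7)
My approach is to reduce the corollary to the functional inequality just established in Theorem \ref{t: BBL discreta (caso p-media)} by feeding it the characteristic functions of $K$, $L$ and $(1-\lambda)K+\lambda L$.

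First I would dispose of the case in which $\G{n}(K)\G{n}(L)=0$: then, by the convention on the $p$-mean ($\Media{p}{a}{b}{\lambda}=0$ whenever $ab=0$), the right-hand side of \eqref{e: BM_lattice_point_general} vanishes and the inequality is trivial. In the remaining case I would apply Theorem \ref{t: BBL discreta (caso p-media)} with
\[
f=\chi_{_K},\qquad g=\chi_{_L},\qquad h=\chi_{_{(1-\lambda)K+\lambda L}}.
\]
The pointwise hypothesis $h((1-\lambda)x+\lambda y)\geq \Media{p}{f(x)}{g(y)}{\lambda}$ for $x\in K$, $y\in L$ holds trivially because both sides equal $1$ on this range, and this is so for every $p\in[-1/n,\infty]$.

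Next I would evaluate the two sides of \eqref{e: BBL discreta (caso p-media)}. Since $(-1,1)^n$ is centrally symmetric, one has $h^{\symbol}(z)=1$ if and only if there is $u\in(-1,1)^n$ with $z+u\in(1-\lambda)K+\lambda L$, equivalently $z\in(1-\lambda)K+\lambda L+(-1,1)^n=M$; hence $h^{\symbol}=\chi_{_M}$ and the left-hand side of \eqref{e: BBL discreta (caso p-media)} equals $\G{n}(M)$. The sums on the right-hand side are plainly $\G{n}(K)$ and $\G{n}(L)$, and \eqref{e: BM_lattice_point_general} follows at once.

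For sharpness, a natural candidate is $K=L=[0,m]^n$ with $m\in\N$: then $(1-\lambda)K+\lambda L=[0,m]^n$ and $M=(-1,m+1)^n$, so $\G{n}(M)=(m+1)^n=\G{n}(K)=\G{n}(L)$, and both sides of \eqref{e: BM_lattice_point_general} equal $(m+1)^n$ for every admissible $p$ (since $\Media{q}{a}{a}{\lambda}=a$). I do not foresee any substantive obstacle here; the only point requiring care is the degenerate case $\G{n}(K)\G{n}(L)=0$, which is absorbed by the convention on $p$-means, and the verification that $h^{\symbol}$ is exactly $\chi_{_M}$, which rests on the central symmetry of $(-1,1)^n$.
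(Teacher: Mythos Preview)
Your proposal is correct and follows essentially the same approach as the paper: apply Theorem~\ref{t: BBL discreta (caso p-media)} to the characteristic functions $f=\chi_{_K}$, $g=\chi_{_L}$, $h=\chi_{_{(1-\lambda)K+\lambda L}}$, observe that $h^{\symbol}=\chi_{_M}$, and use $K=L=[0,m]^n$ for sharpness. Your explicit handling of the degenerate case $\G{n}(K)\G{n}(L)=0$ and the remark on the central symmetry of $(-1,1)^n$ are minor elaborations the paper leaves implicit.
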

\begin{proof}
The result is an immediate consequence of Theorem \ref{t: BBL discreta (caso p-media)}, just by taking $f=\chi_{_K}$, $g=\chi_{_L}$ and $h=\chi_{_{(1-\lambda)K+\lambda L}}$, for which we clearly have that $h^\symbol=\chi_{_{(1-\lambda)K+\lambda L+(-1,1)^n}}$.

\smallskip

Now, in order to show that the equality may be attained, we consider $K = L = [0,m]^n$ with $m \in \N$,
for which
$\G{n}\bigl((1-\lambda)K + \lambda L+(-1,1)^n\bigr)=\G{n}(K)=\G{n}(L)=(m+1)^n$.
\end{proof}

We notice that \eqref{e: BM_lattice_point_general} for $p=\infty$ yields \eqref{e: BM_lattice_point_no_G(K)G(L)>0}
for bounded sets $K,L\subset\R^n$ with $\G{n}(K)\G{n}(L)>0$. So, to prove Theorem
\ref{t: BM_lattice_point_no_G(K)G(L)>0}, it is enough to deal with the case in which (only) one of the sets, say $L$, has no integer points. To this aim, first we show the following auxiliary result:
\begin{theorem}\label{t: BM_lattice_point_[-(q-1)/q,(q-1)/q]}
Let $K, L \subset \R^n$ be bounded sets such that $\G{n}(K)\G{n}(L)>0$
and let $\alpha=m/q$ and $\beta=p/q$ with $m,p,q\in\N$ so that $\alpha + \beta\leq1$. Then
\begin{equation}\label{e: BM_lattice_point_[-(q-1)/q,(q-1)/q]}
\G{n}\left(\alpha K+\beta L+\left[-\frac{q-1}{q},\frac{q-1}{q}\right]^n\right)^{1/n}
\geq \alpha\G{n}(K)^{1/n} + \beta\G{n}(L)^{1/n}.
\end{equation}
\end{theorem}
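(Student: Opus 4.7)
My plan is to establish the theorem by induction on the dimension $n$, reducing to a one-dimensional base case which I handle directly by exploiting the rational structure of $\alpha = m/q$ and $\beta = p/q$. The main obstacle I anticipate is the one-dimensional case when $\alpha + \beta < 1$, since Lemma~\ref{l: G(M)+Z(M)>= (1-lam)G(K)+lamG(L)} is formulated for a single convex combination; I would deal with this by augmenting with a dummy set $\{0\}$ carrying the residual weight.

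For the 1D case, I would first reduce to $K, L \subset \Z$ finite (both sides of the inequality only decrease when passing to $K \cap \Z, L \cap \Z$). Each $s \in \alpha K + \beta L$ then lies in $(1/q)\Z$, so its distance to the nearest integers from above or below is at most $(q-1)/q$. Thus the rounded set
\[ M := \bigcup_{s \in \alpha K + \beta L}\bigl[\lfloor s \rfloor, \lceil s \rceil\bigr] \]
is contained in $\alpha K + \beta L + [-(q-1)/q,(q-1)/q]$, and $M$ is a finite union of compact intervals whose endpoints are all integers, so $\fun(M) = 0$. For $\alpha + \beta = 1$, an immediate application of Lemma~\ref{l: G(M)+Z(M)>= (1-lam)G(K)+lamG(L)} with $\lambda = \beta$ gives $\G{1}(M) \geq \alpha\G{1}(K) + \beta\G{1}(L)$, and the inclusion above finishes the case. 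For $\alpha + \beta < 1$, I would translate so $0 \in K \cap L$, treat $1 - \alpha - \beta$ as weight on $\{0\}$, and iterate the lemma (first on $L$ with $\{0\}$, then on $K$ with the result); this yields $\G{1}(M) \geq \alpha\G{1}(K) + \beta\G{1}(L) + (1 - \alpha - \beta)$, and dropping the nonnegative last summand concludes.

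For the inductive step on $n$, I would slice $K$ and $L$ along $\e_n$, following the strategy used in the proof of Theorem~\ref{t: BBL discreta (caso p-media)}. For each pair of heights $t_K \in \pi_n(K), t_L \in \pi_n(L)$, the $(n-1)$-dimensional inductive hypothesis applied to the sections $K(t_K), L(t_L)$ yields a slice-level inequality. Introducing the analogous height functions $f_2, g_2, h_2$ (as in the proof of Theorem~\ref{t: BBL discreta (caso p-media)}) and applying the one-dimensional case in a functional form---namely, an $[-(q-1)/q,(q-1)/q]$-variant of Lemma~\ref{l: BBL Paso1 (caso p-media) new}, which can be proved by the same rounded-set construction above combined with Corollary~\ref{c: Fubini discreto (lattice point enumerator)}---then completes the induction.
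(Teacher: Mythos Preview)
Your overall strategy---induction on $n$ with a one-dimensional base case, then a Borell--Brascamp--Lieb-style slicing for the inductive step---matches the paper's sketch. Your treatment of the case $\alpha+\beta=1$ in dimension one is correct and in fact slightly more economical than the paper's: reducing to $K,L\subset\Z$, observing that $\alpha K+\beta L\subset(1/q)\Z$, and passing to the integer-rounded set $M=\bigcup_s[\lfloor s\rfloor,\lceil s\rceil]$ with $\fun(M)=0$ lets Lemma~\ref{l: G(M)+Z(M)>= (1-lam)G(K)+lamG(L)} apply directly, whereas the paper re-derives the needed floor inequality from scratch (Lemma~\ref{l: G(alpha K+beta L+[-(q-1)/q,(q-1)/q])}).

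The case $\alpha+\beta<1$, however, has a genuine gap. Your plan to ``iterate the lemma (first on $L$ with $\{0\}$, then on $K$ with the result)'' does not go through. After the inner step you are left with the set $L''=\frac{\beta}{1-\alpha}L$, and there are two options, both of which fail: if you apply Lemma~\ref{l: G(M)+Z(M)>= (1-lam)G(K)+lamG(L)} with $L''$ playing the role of $M$, then $\fun(L'')$ need not vanish and the resulting lower bound on $\G{1}(L'')$ is too weak; if instead you round $L''$ to $M_1=\bigcup_{s\in L''}[\lfloor s\rfloor,\lceil s\rceil]$ so that $\fun(M_1)=0$, then $M_1\supsetneq L''$ and the outer inclusion $\alpha K+(1-\alpha)M_1\subset M$ can fail. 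Concretely, take $q=3$, $\alpha=\beta=1/3$, $K=\{0\}$, $L=\{0,3\}$: then $M=\{0,1\}$, while $M_1=\{0\}\cup[1,2]$ and $\alpha K+(1-\alpha)M_1=\{0\}\cup[2/3,4/3]\not\subset M$. (Iterating the already-proved $\alpha+\beta=1$ inequality instead runs into the same problem: the interval $[-(q-1)/q,(q-1)/q]$ gets added twice and becomes too large.)

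The fix is easy but it is not an iteration: either observe that the \emph{proof} of Lemma~\ref{l: G(M)+Z(M)>= (1-lam)G(K)+lamG(L)} extends verbatim to three sets with weights $\alpha,\beta,\gamma$ summing to $1$ and apply that with the third set $\{0\}$, or follow the paper's route and establish directly the arithmetic fact
\[
\left\lfloor \alpha x+\beta y+\tfrac{q-1}{q}\right\rfloor\ \geq\ \alpha\lfloor x\rfloor+\beta\lfloor y\rfloor,
\]
which handles all $\alpha+\beta\leq1$ uniformly and is the content of Lemma~\ref{l: G(alpha K+beta L+[-(q-1)/q,(q-1)/q])}.
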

The proof of Theorem \ref{t: BM_lattice_point_[-(q-1)/q,(q-1)/q]}
is ideologically similar to the proof we have provided for Corollary \ref{c: BM_lattice_point}. So only the sketch of the proof is presented here. The main step is to prove an analogue of Theorem \ref{t: BBL discreta (caso p-media)} and apply it to $f=\chi_{_K}$, $g=\chi_{_L}$ and $h=\chi_{_{\alpha K+\beta L}}$.  The main new ingredient in the proof of such an analogue is the one dimensional case, for which we require an adaptation of Lemma \ref{l: 1-dim_BM_G}, which is done below in Lemma \ref{l: G(alpha K+beta L+[-(q-1)/q,(q-1)/q])}.


\begin{lemma}\label{l: G(alpha K+beta L+[-(q-1)/q,(q-1)/q])}
Let $K, L \subset \R$ be non-empty bounded sets and let $\alpha=m/q$ and $\beta=p/q$ with $m,p,q\in\N$ so that $\alpha + \beta\leq1$. Then
\begin{equation}\label{e: G(alpha K+beta L+[-(q-1)/q,(q-1)/q])}
\G{1}\left(\alpha K+\beta L+\left[-\frac{q-1}{q},\frac{q-1}{q}\right]\right) \geq \alpha\G{1}(K) + \beta\G{1}(L).
\end{equation}
\end{lemma}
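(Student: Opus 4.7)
The plan is to mirror the proof of Lemma \ref{l: 1-dim_BM_G}, substituting the weights $\alpha,\beta$ (with $\alpha+\beta\le 1$) and the interval $J=[-(q-1)/q,(q-1)/q]$ for $(1-\lambda),\lambda$ and $(-1,1)$, respectively. The starting point is an $(\alpha,\beta)$-analogue of Lemma \ref{l: G(M)+Z(M)>= (1-lam)G(K)+lamG(L)}: if $\alpha K+\beta L\subset M=\bigcup_{i=1}^s[a_i,b_i]$, a finite union of pairwise disjoint compact intervals, then
\[
\G{1}(M)+\fun(M)\ge \alpha\G{1}(K)+\beta\G{1}(L).
\]
The induction on $s$ goes through verbatim from the proof of Lemma \ref{l: G(M)+Z(M)>= (1-lam)G(K)+lamG(L)}; the only novelty is the base case $s=1$, where, after reproducing the estimate
\[
\floor{b_1}-\ceil{a_1}+1\ge \alpha(\floor{b}-\ceil{a})+\beta(\floor{d}-\ceil{c})+1-\fun(M)
\]
with $a=\inf K$, $b=\sup K$, $c=\inf L$, $d=\sup L$, one splits the trailing $+1$ as $\alpha+\beta+(1-\alpha-\beta)$ and absorbs the non-negative slack $(1-\alpha-\beta)$ using the hypothesis $\alpha+\beta\le 1$.

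Assume now that both $K\cap\Z$ and $L\cap\Z$ are non-empty. Since $\alpha(K\cap\Z)+\beta(L\cap\Z)+J\subset\alpha K+\beta L+J$ while replacing $K$ by $K\cap\Z$ and $L$ by $L\cap\Z$ leaves the right-hand side of the target inequality unchanged, we may assume $K,L\subset\Z$ are finite. Then $N:=\alpha K+\beta L\subset\frac{1}{q}\Z$, and the key observation is that $[\floor{x},\ceil{x}]\subset x+J$ for every $x\in\frac{1}{q}\Z$: writing $x=a+s/q$ with $a\in\Z$ and $0\le s\le q-1$, the inequalities $\floor{x}\ge x-(q-1)/q$ and $\ceil{x}\le x+(q-1)/q$ are immediate. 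Setting $M=\bigcup_{x\in N}[\floor{x},\ceil{x}]$, this is a finite union of compact intervals with integer endpoints (so $\fun(M)=0$) satisfying $N\subset M\subset N+J$. The preliminary lemma yields $\G{1}(M)\ge\alpha\G{1}(K)+\beta\G{1}(L)$, and $M\cap\Z\subset(N+J)\cap\Z$ gives $\G{1}(\alpha K+\beta L+J)\ge\G{1}(M)$, finishing the main case.

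The main obstacle is the edge case in which, say, $K\cap\Z=\emptyset$ (so $\G{1}(K)=0$) but $L\cap\Z\ne\emptyset$: the preceding reduction collapses because $\alpha(K\cap\Z)+\beta(L\cap\Z)$ is empty, and the pointwise inclusion fails once the relevant points leave $\frac{1}{q}\Z$. I would close this case by picking any $k_0\in K$, setting $t=\alpha k_0$, replacing $L$ by $L\cap\Z$, and proving the self-contained inequality $|(t+\beta L+J)\cap\Z|\ge\beta|L|$ directly: decompose $L$ into maximal clusters of consecutive integers with gaps at most $2(q-1)/p$, so that each cluster of size $s$ produces a single connected component of $t+\beta L+J$ of length at least $\beta s+(q-1)/q$; since $\{\beta s\}\in\{0,1/q,\dots,(q-1)/q\}$, this length always contains at least $\beta s$ integers, and summing over clusters yields the bound.
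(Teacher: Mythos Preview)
Your proof is correct, but takes a genuinely different route from the paper's.

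The paper does \emph{not} adapt Lemma~\ref{l: G(M)+Z(M)>= (1-lam)G(K)+lamG(L)} with the $\fun(M)$ correction term. Instead, it first isolates the arithmetic fact
\[
\floor{\alpha x+\beta y+\tfrac{q-1}{q}}\ge \alpha\floor{x}+\beta\floor{y}
\qquad(x,y\in\R),
\]
and then proves an intermediate result in which the interval $J=[-(q-1)/q,(q-1)/q]$ is already attached to $M$: if $\alpha K+\beta L\subset M$ and $M+J$ is a finite disjoint union of compact intervals, then $\G{1}(M+J)\ge\alpha\G{1}(K)+\beta\G{1}(L)$. The base case of the induction uses the floor identity above (so no $\fun$ term ever appears), and---crucially---the intermediate result holds for \emph{arbitrary} non-empty bounded $K,L$. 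Hence the final reduction ``replace $K,L$ by compact subsets with the same $\G{1}$'' needs no separate edge case: if $K\cap\Z=\emptyset$ one may simply take a singleton from $K$.

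Your approach stays closer to Lemmas~\ref{l: G(M)+Z(M)>= (1-lam)G(K)+lamG(L)} and~\ref{l: 1-dim_BM_G}: you keep the $\fun(M)$ term, and the price is that you then need $M=\bigcup_{x\in N}[\floor{x},\ceil{x}]$ to have integer endpoints \emph{and} to sit inside $N+J$. Your observation that $N=\alpha(K\cap\Z)+\beta(L\cap\Z)\subset\frac{1}{q}\Z$ forces $[\floor{x},\ceil{x}]\subset x+J$ is neat, but it only works when $K\cap\Z$ and $L\cap\Z$ are both non-empty, which is why you are forced into the cluster argument for the remaining case. That argument is fine (an interval of length $\ell$ contains at least $\floor{\ell}$ integers, and $\floor{\beta s+(q-1)/q}\ge\beta s$ since $\beta s\in\frac{1}{q}\Z$), but it is extra work that the paper's route avoids entirely by building $J$ into the intermediate lemma from the start.
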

\begin{proof}
First we notice that, for any $x,y\in\R$, we have
\begin{equation}\label{e:property_floor_(mx+py)/q}
\floor{\frac{mx}{q}+\frac{py}{q}+\frac{q-1}{q}}\geq\frac{m\floor{x}+p\floor{y}}{q}.
\end{equation}
Indeed, given $z\in\R$, from the fact that
\[\frac{z}{q}+\frac{q-1}{q}\geq\frac{\floor{z}}{q}+\frac{q-1}{q}=c+\frac{r}{q}+\frac{q-1}{q}
\]
for some $c,r\in\Z$ with $0\leq r\leq q-1$, we get that
\begin{equation*}\label{e:property_floor_x/q}
\floor{\frac{z}{q}+\frac{q-1}{q}}\geq\frac{\floor{z}}{q}
\end{equation*}
for any $z\in\R$. This now implies that
\begin{equation*}
\floor{\frac{mx}{q}+\frac{py}{q}+\frac{q-1}{q}}\geq\frac{\floor{mx+py}}{q}\geq\frac{m\floor{x}+p\floor{y}}{q},
\end{equation*}
which yields \eqref{e:property_floor_(mx+py)/q}.

\medskip

Next we show the following:
if $K, L, M \subset \R$ are non-empty sets with $\alpha K+\beta L\subset M$ and
such that $M + [-(q-1)/q,(q-1)/q] = \bigcup_{i=1}^s [a_i, b_i]$ is a finite union of (pairwise disjoint)
compact intervals then
\begin{equation}\label{e: G(M+[-(q-1)/q,(q-1)/q])>=}
\G{1}\bigl(M+[-(q-1)/q,(q-1)/q]\bigr) \geq \alpha\G{1}(K) + \beta\G{1}(L).
\end{equation}
We prove it by induction on the number of intervals $s$ of $M+[-(q-1)/q,(q-1)/q]$. For $s=1$, i.e., when $M +[-(q-1)/q,(q-1)/q]= [a_1, b_1]$ is a (non-empty) compact interval, we have on the one hand that
$$
\G{1}\bigl(M+[-(q-1)/q,(q-1)/q]\bigr) = \floor{b_1} - \ceil{a_1} +1.
$$
Moreover, denoting by $a = \inf K$, $b = \sup K$, $c = \inf L$ and $d = \sup L$, we clearly get
$\G{1}(K)\leq \G{1}([a,b]) = \floor{b} - \ceil{a} +1$ and $\G{1}(L) \leq \G{1}([c,d]) = \floor{d} - \ceil{c} +1$.
On the other hand, the inclusion $\alpha K + \beta L \subset M$ implies that
\begin{equation*}
a_1\leq\alpha a+\beta c-\frac{q-1}{q}\leq\alpha b+\beta d+\frac{q-1}{q}\leq b_1.
\end{equation*}
Altogether, and using \eqref{e:property_floor_(mx+py)/q} jointly with the facts that
$\ceil{x}=-\floor{-x}$ for any $x\in\R$, and $\alpha+\beta\leq1$, we obtain
\begin{equation*}
\begin{split}
\G{1}\left(M+\left[-\frac{q-1}{q},\frac{q-1}{q}\right]\right) &= \floor{b_1} -\ceil{a_1} + 1\\
&\geq  \floor{\alpha b+\beta d+\frac{q-1}{q}} - \ceil{\alpha a+\beta c-\frac{q-1}{q}} + 1\\
&\geq\alpha\floor{b}+\beta\floor{d}-\alpha\ceil{a}+\beta\ceil{c}+1\\
&\geq\alpha\G{1}([a,b])+\beta\G{1}([c,d])\\
&\geq\alpha\G{1}(K)+\beta\G{1}(L).
\end{split}
\end{equation*}

Thus, we suppose that \eqref{e: G(M+[-(q-1)/q,(q-1)/q])>=} is true for $s\geq1$ and assume that
$M+[-(q-1)/q,(q-1)/q] = \bigcup_{i=1}^{s+1} [a_i, b_i]$, where $b_i < a_{i+1}$ for all $1\leq i\leq s$.

Denoting by $M_1$ and $M_2$ the subsets of $M$ such that $M_1+[-(q-1)/q,(q-1)/q]= [a_1, b_1]$ and
$M_2+[-(q-1)/q,(q-1)/q] = \bigcup_{i=2}^{s+1} [a_i, b_i]$, we may assume, without loss of generality, that $M_1\cap (\alpha K + \beta L)\neq\emptyset$ (otherwise, the result follows from the induction hypothesis applied
to the sets $K$, $L$ and $M_2$). Hence, we may define $m = \sup \bigl(M_1\cap(\alpha K + \beta L)\bigr)$ and then, since $K$ and $L$ are bounded, there exist $k \in \cl K$ and $l \in \cl L$ such that $\alpha k +  \beta l = m$.
Thus, considering the sets $K_1 = \{x \in K: x \leq k\}$, $K_2 = K \setminus K_1$, $L_1 = \{x \in L: x \leq l\}$ and $L_2 = L \setminus L_1$, and taking into account that $m+[-(q-1)/q,(q-1)/q]\subset[a_1, b_1]$, we clearly have that $\alpha K_1 + \beta L_1 \subset M_1$ and $\alpha K_2 + \beta L_2 \subset M_2$. Therefore, applying the induction hypothesis (and taking into account that $M_1+[-(q-1)/q,(q-1)/q]$ and $M_2+[-(q-1)/q,(q-1)/q]$ are disjoint), we get
\begin{equation*}
\begin{split}
&\,\G{1}\bigl(M+[-(q-1)/q,(q-1)/q]\bigr) \\
&= \G{1}\bigl(M_1+[-(q-1)/q,(q-1)/q]\bigr) + \G{1}\bigl(M_2+[-(q-1)/q,(q-1)/q]\bigr)\\
&\geq \alpha\G{1}(K_1)+\beta\G{1}(L_1) + \alpha\G{1}(K_2)+\beta\G{1}(L_2)
= \alpha\G{1}(K) + \beta\G{1}(L),
\end{split}
\end{equation*}
which shows \eqref{e: G(M+[-(q-1)/q,(q-1)/q])>=}.

\smallskip

Next we prove \eqref{e: G(alpha K+beta L+[-(q-1)/q,(q-1)/q])}. We observe that we may assume, without loss of generality, that $K$ and $L$ are compact. Indeed, otherwise, considering the compact sets $K'= K\cap\Z$ and $L'=L\cap\Z$, for which we have $\G{1}(K')=\G{1}(K)$, $\G{1}(L')=\G{1}(L)$ and, from the monotonicity of $\G{1}(\cdot)$,
$\G{1}\bigl(\alpha K+\beta L+[-(q-1)/q,(q-1)/q]\bigr)\geq\G{1}\bigl(\alpha K'+\beta L'+[-(q-1)/q,(q-1)/q]\bigr)$, we would get the result.
So, \eqref{e: G(alpha K+beta L+[-(q-1)/q,(q-1)/q])} follows from applying \eqref{e: G(M+[-(q-1)/q,(q-1)/q])>=} with $M=\alpha K+\beta L$, because the fact that $M$ is compact implies that there exists a finite sequence $m_1,\dots,m_r$ such that $\{m_1,\dots,m_r\}+[-(q-1)/q,(q-1)/q]=M+[-(q-1)/q,(q-1)/q]$. This concludes the proof.
\end{proof}

We are now in a position to prove Theorem \ref{t: BM_lattice_point_no_G(K)G(L)>0}.

\begin{proof}[Proof of Theorem \ref{t: BM_lattice_point_no_G(K)G(L)>0}]
For the sake of brevity, we will again denote by $M_\lambda=(1-\lambda)K+\lambda L$.
By Corollary \ref{c: BM_lattice_point} for $p=\infty$, and the monotonicity of $\G{n}(\cdot)$, it is enough to
show the result in the case in which $K=K\cap\Z^n$ (for which, clearly, $\G{n}(K)>0$) and $L=\{x\}$ with $x=(x_1,\dots,x_n)\notin\Z^n$.

First we show the case in which $\lambda=p/q$, $p,q\in\N$, is a rational number. Then,
writing $px=z+y$, with $z\in\Z^n$ and $y=(y_1,\dots,y_n)\in[0,1)^n$, and using
Theorem \ref{t: BM_lattice_point_[-(q-1)/q,(q-1)/q]}, we have
\begin{equation*}
\begin{split}
\G{n}\bigl(M_\lambda +(-1,1)^n\bigr)^{1/n}
&=\G{n}\left(\frac{q-p}{q}K+\frac{1}{q}z +\prod_{i=1}^n\left(-1+\frac{y_i}{q},1+\frac{y_i}{q}\right)\right)^{1/n}\\
&\geq\G{n}\left(\frac{q-p}{q}K+\frac{1}{q}z +\left[-\frac{q-1}{q},\frac{q-1}{q}\right]^n\right)^{1/n}\\
&\geq (1-\lambda)\G{n}(K)^{1/n}+\frac{1}{q}>(1-\lambda)\G{n}(K)^{1/n},
\end{split}
\end{equation*}
as desired.

Next we prove the case of an irrational $\lambda\in\R\setminus\Q$.
Let $I$ be the (possibly empty) subset of $\{1,\dots,n\}$ defined in the following way: $i\in I$ if and only if $x_i=a_i+b_i/\lambda$ for some $a_i,b_i\in\Z$. We point out that such $a_i,b_i\in\Z$ are necessarily unique,
since $\lambda\in\R\setminus\Q$.
Hence we may then consider the point $x'=(x_1',\dots,x_n')$ given by $x_i'=b_i/\lambda$ if $i\in I$ and $x_i'=0$ otherwise, for all $i=1,\dots, n$.
First we notice that, since $\lambda x'$ is an integer point, we have
\begin{equation*}
\begin{split}
\G{n}\bigl(M_\lambda +(-1,1)^n\bigr)&=\G{n}\bigl((1-\lambda)K+\lambda(x-x')+\lambda x'+(-1,1)^n\bigr)\\
&=\G{n}\bigl((1-\lambda)K+\lambda(x-x')+(-1,1)^n\bigr).
\end{split}
\end{equation*}

Next, denoting by $x_0=x-x'$, we will show that there exists $\delta>0$ such that
\begin{equation}\label{e:lambda_irrat_mu}
\G{n}\bigl((1-\lambda)K+\lambda x_0+(-1,1)^n\bigr)\geq\G{n}\bigl((1-\mu)K+\mu x_0+(-1,1)^n\bigr)
\end{equation}
for all $\mu$ with $|\mu-\lambda|<\delta$.
Thus, taking a sequence $(r_m)_m\subset\Q\cap(0,1)$ with $\lim_{m\to\infty} r_m=\lambda$
and $|r_m-\lambda|<\delta$ for all $m\in\N$, we get, from the previous case, that
\begin{equation*}
\begin{split}
\G{n}\bigl((1-\lambda)K+\lambda x_0+(-1,1)^n\bigr)^{1/n}&\geq\G{n}\bigl((1-r_m)K+r_m x_0+(-1,1)^n\bigr)^{1/n}\\
&\geq(1-r_m)\G{n}(K)^{1/n}+r_m\G{n}(\{x_0\})^{1/n}\\
&\geq(1-r_m)\G{n}(K)^{1/n}
\end{split}
\end{equation*}
for all $m\in\N$, which yields the result.

To show \eqref{e:lambda_irrat_mu} we notice that, since $K$ is finite, $(1-\lambda)K+\lambda x_0+[-1,1]^n$ is a finite union of closed unit cubes and then, for any $\mu$, $(1-\mu)K+\mu x_0+[-1,1]^n$ is the union of the corresponding translates of the cubes that constitute $(1-\lambda)K+\lambda x_0+[-1,1]^n$.
Thus, there exists $\delta_1>0$ such that if
$z\in\Z^n$ satisfies that $z\notin(1-\lambda)K+\lambda x_0+[-1,1]^n$ then $z\notin(1-\mu)K+\mu x_0+(-1,1)^n$ for all $|\mu-\lambda|<\delta_1$.
Moreover, if $(1-\lambda)K+\lambda x_0+(-1,1)^n$ contains no integer boundary points, we may take $\delta=\delta_1$ and we are done.

So, we may assume that $\bd\bigl(\bigl((1-\lambda)K+\lambda x_0+(-1,1)^n\bigr)\cap\Z^n\bigr)\neq\emptyset$.
Let $z\in\Z^n$ be a boundary point of $(1-\lambda)K+\lambda x_0+(-1,1)^n$ and let $k\in K$.
On the one hand, if $z$ is in the boundary of the cube $(1-\lambda)k+\lambda x_0+(-1,1)^n$, there exist $i\in\{1,\dots,n\}$ and $\varepsilon\in\{-1,1\}$ such that $(1-\lambda)k_i+\lambda(x_i-x_i')+\varepsilon=z_i$, where $k_i, z_i\in\Z$ are the $i$-th components of $k,z$, respectively. This implies that $x_i-x_i'=k_i+(z_i-\varepsilon-k_i)/\lambda$ and then, from the definition of both $I$ and $x'$, we get that $i\in I$ and so $a_i=k_i+(z_i-\varepsilon-k_i)/\lambda$, which yields that $z_i-\varepsilon-k_i=0$ (because $\lambda$ is irrational) and thus $x_i-x_i'=a_i=k_i$. But then $(1-\mu)k_i+\mu(x_i-x_i')+\varepsilon=(1-\lambda)k_i+\lambda(x_i-x_i')+\varepsilon=z_i$ for all $\mu\in\R$,
and hence $z$ lies in the affine hull of a facet of the open cube $(1-\mu)k+\mu x_0+(-1,1)^n$, which implies that $z\notin(1-\mu)k+\mu x_0+(-1,1)^n$ for any $\mu$.
On the other hand, if $z$ is not in the boundary of the cube $(1-\lambda)k+\lambda x_0+(-1,1)^n$ (and so not contained in it either), there exists $\delta_{z,k}>0$ so that $z\notin(1-\mu)k+\mu x_0+(-1,1)^n$ for all $\mu$ with $|\mu-\lambda|<\delta_{z,k}$.

Since the number of integer points $z$ in the boundary of $(1-\lambda)K+\lambda x_0+(-1,1)^n$ is finite (and so is $K$), we may define $\delta_2=\min_{z,k}\delta_{z,k}$.
Altogether, \eqref{e:lambda_irrat_mu} holds for $\delta=\min\{\delta_1,\delta_2\}$.
\end{proof}

\smallskip

We conclude this section by showing that the classical Borell-Brascamp-Lieb inequality \eqref{e:BBL_means} can be obtained from the discrete version \eqref{e: BBL discreta (caso p-media)} under some mild assumptions on the functions involved:
\begin{theorem}
The discrete Borell-Brascamp-Lieb type inequality \eqref{e: BBL discreta (caso p-media)} implies the classical Borell-Brascamp-Lieb inequality \eqref{e:BBL_means}, provided that the functions $f,g$ are Riemann integrable and $h$ is upper semicontinuous.
\end{theorem}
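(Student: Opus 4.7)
The plan is to apply the lattice version of Theorem~\ref{t: BBL discreta (caso p-media)} (mentioned right after its statement) to the refined lattice $\Lambda_N=(1/N)\Z^n$, divide by $N^n$ so that lattice sums become Riemann sums, and pass to the limit $N\to\infty$. By a standard truncation-and-exhaustion argument, combined with continuity of $\Media{\cdot}{\cdot}{\cdot}{\lambda}$ in its arguments, I may assume that $f$ and $g$ are supported in some fixed cubes $K$ and $L$. Taking the basis $\mathcal{B}_N=\{\e_1/N,\dots,\e_n/N\}$ of $\Lambda_N$, the associated linear map $\varphi$ sends $(-1,1)^n$ to $\frac{1}{N}(-1,1)^n$; the lattice form of Theorem~\ref{t: BBL discreta (caso p-media)}, divided by $N^n$, then reads
\[
\frac{1}{N^n}\sum_{z\in M_N\cap\Lambda_N}h_N^{\symbol}(z)\geq\Media{\frac{p}{np+1}}{\frac{1}{N^n}\sum_{x\in K\cap\Lambda_N}f(x)}{\frac{1}{N^n}\sum_{y\in L\cap\Lambda_N}g(y)}{\lambda},
\]
where $M_N=(1-\lambda)K+\lambda L+\frac{1}{N}(-1,1)^n$ and $h_N^{\symbol}(z)=\sup_{u\in\frac{1}{N}(-1,1)^n}h(z+u)$.

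The right-hand side is the easy part: Riemann integrability of $f$ and $g$ on the Jordan-measurable cubes $K$ and $L$ makes the two sums genuine Riemann sums converging to $\int_{\R^n}f$ and $\int_{\R^n}g$ respectively, and continuity of the $p$-mean then sends the entire right-hand side to $\Media{\frac{p}{np+1}}{\int f}{\int g}{\lambda}$. For the left-hand side I would introduce the step function $\Phi_N$ which on the half-open cube $z+[0,1/N)^n$ (for $z\in\Lambda_N$) equals $h_N^{\symbol}(z)\chi_{M_N}(z)$, so that the scaled lattice sum equals $\int_{\R^n}\Phi_N(x)\,\dlat x$. For $x\in\inter\bigl((1-\lambda)K+\lambda L\bigr)$ and $N$ large, letting $z=z_N(x)$ denote the unique point of $\Lambda_N$ with $x\in z+[0,1/N)^n$, the inclusion $x-z\in\frac{1}{N}(-1,1)^n$ yields $h(x)\leq h_N^{\symbol}(z)=\Phi_N(x)$; on the other hand $\Phi_N(x)\leq\sup_{|y-x|_\infty<2/N}h(y)\to h(x)$ by the upper semicontinuity of $h$. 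Since the boundary of $(1-\lambda)K+\lambda L$ has Lebesgue measure zero and a finite u.s.c.\ function is bounded on compacta (providing a uniform dominating function on a fixed bounded enlargement), dominated convergence gives $\int\Phi_N\to\int_{(1-\lambda)K+\lambda L}h\leq\int_{\R^n}h$, and combining with the right-hand side limit yields the classical BBL.

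The main obstacle I foresee is precisely this left-hand convergence. The ``sup window'' $\frac{1}{N}(-1,1)^n$ has side $2/N$, twice the lattice spacing, so $\Phi_N$ is naturally an inflated upper Riemann sum which for an arbitrary bounded $h$ could overshoot $\int h$ in the limit. The upper semicontinuity hypothesis is exactly what forces $\sup_{|y-x|_\infty<2/N}h(y)\to h(x)$ pointwise and thereby cancels this inflation; without it, the discrete inequality would yield a strictly weaker bound than the classical Borell-Brascamp-Lieb inequality under this scheme.
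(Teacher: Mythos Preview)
Your approach is correct and follows the same overall strategy as the paper: apply the discrete inequality on the refined lattice $(1/N)\Z^n$, normalize, and let $N\to\infty$. The execution differs in two places. First, the paper does not apply Theorem~\ref{t: BBL discreta (caso p-media)} to $f,g,h$ directly but to the auxiliary functions $f_k(x)=\inf_{z\in x+[0,2^{-k}]^n}f(z)$ (and similarly $g_k,h_k$); this makes the right-hand sums genuine \emph{lower} Darboux sums, whose convergence to $\int f,\int g$ is the very definition of Riemann integrability, whereas you use tagged Riemann sums at lattice points, which of course converge as well. Second, for the left-hand side the paper controls $\sum(h_k)^{\symbol_k}$ by first passing to $\int h^{\symbol_k}$ and then using a layer-cake (Fubini plus monotone convergence) argument on the closed level sets $\{h\ge t\}$ to show $\int h^{\symbol_k}\to\int h$; your route via the step function $\Phi_N$, pointwise convergence from upper semicontinuity, and dominated convergence is more direct and avoids the detour through $h_k$. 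Both arguments exploit upper semicontinuity in exactly the way you identify: to force $\sup_{|y-x|_\infty<2/N}h(y)\downarrow h(x)$ and thereby prevent the doubled window from inflating the limit. Your version is a legitimate simplification of the paper's proof.
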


Before proving this result, we notice that it is not possible to directly obtain any of the
discrete Brunn-Minkowski type inequalities stated so far in the paper
from the classical one \eqref{e:BM_additive}, by using the method of replacing the points by suitable
compact sets. As pointed out by Gardner and Gronchi in
\cite[pages~3996--3997]{GG}:

\medskip

\begin{center}
\begin{minipage}{0.9\textwidth}
``{\it it is worth remarking that the obvious idea of replacing the points
in the two finite sets by small congruent balls and applying the classical
Brunn-Minkowski inequality to the resulting compact sets is doomed to
failure. The fact that the sum of two congruent balls is a ball of twice
the radius introduces an extra factor of $1/2$ that renders the resulting
bound weaker than even the trivial bound (11) below}\,''.
\end{minipage}
\end{center}

\medskip

\noindent We clarify that (11) in \cite{GG} coincides with
\eqref{e:|A+B|>|A|+|B|-1} of the present paper.

\smallskip

In the following, for a function $\phi:\R^n\longrightarrow\R_{\geq0}$, we write $\phi^{\symbol_k}:\R^n\longrightarrow\R_{\geq0}$ to denote the function given by	
$\phi^{\symbol_k}(z) = \sup_{u\in(-2^{-k},2^{-k})^n}\phi(z+u)$ for all $z\in\R^n$.
\begin{proof}
Consider  $f,g,h:\R^n\longrightarrow\R_{\geq0}$  which satisfy the conditions of
Theorem \ref{t:BBL_means}, i.e., $f,g,h$ are measurable such that
\[
h((1-\lambda)x + \lambda y)\geq\Media{p}{f(x)}{g(y)}{\lambda}
\]
holds for all $x,y\in\R^n$ and for some fixed  $\lambda\in(0,1)$ and  $-1/n\leq p\leq\infty$.
Let $m\in \N$ and let $K = [-m,m]^n$. We will first show that
\begin{equation}\label{e:BBL_over_K}
\int_K h(x) \dlat x \geq \Media{\frac{p}{np+1}}{ \int_K f(x) \dlat x}{ \int_K g(x) \dlat x}{\lambda},
\end{equation}
for which we may assume (multiplying by $\chi_{_K}$ if necessary) that $f$, $g$ and $h$ vanish outside $K$.

For each $k\in\N$, we define the functions $f_k, g_k, h_k:\R^n\rightarrow\R_{\geq0}$ given by
\[ f_k(x) = \inf_{z \in x + [0,2^{-k}]^n} f(z), \quad g_k(x) = \inf_{z\in x + [0,2^{-k}]^n} g(z)\]
and
\[h_k(x) = \inf_{z\in x + [0,2^{-k}]^n} h(z). \]		
Writing for short $K_0=\inter K$, note that for any $x,y\in K_0$ we have
\begin{equation*}
\begin{split}
h_k((1-\lambda)x+\lambda y) &= \inf_{z \in (1-\lambda)x+\lambda y + [0,2^{-k}]^n} h(z)\\
&= \inf_{z \in (1-\lambda)(x+ [0,2^{-k}]^n)+\lambda (y+[0,2^{-k}]^n)} h(z)\\
&= \inf_{z_1 \in x+[0,2^{-k}]^n, z_2 \in y+[0,2^{-k}]^n} h((1-\lambda)z_1 + \lambda z_2)\\
&\geq \inf_{z_1 \in x+[0,2^{-k}]^n, z_2 \in y+[0,2^{-k}]^n} \Media{p}{f(z_1)}{g(z_2)}{\lambda}\\
&\geq \Media{p}{\inf_{z_1 \in x+[0,2^{-k}]^n} f(z_1)}{\inf_{z_2 \in y+[0,2^{-k}]^n} g(z_2)}{\lambda}\\
&= \Media{p}{f_k(x)}{g_k(y)}{\lambda},
\end{split}
\end{equation*}
and thus, we can use Theorem \ref{t: BBL discreta (caso p-media)} for $2^{-k}\Z^n$ to deduce that, for any $k\in \N$, we have
\begin{equation}\label{e:discBBL_lattice2-kZn}
2^{-kn}\sum_{z \in K\cap2^{-k}\Z^n} (h_k)^{\symbol_k}(z) \geq \Media{\frac{p}{np+1}}{2^{-kn}\sum_{x \in K_0\cap2^{-k}\Z^n} f_k(x)}{2^{-kn}\sum_{y \in K_0\cap2^{-k}\Z^n} g_k(y)}{\lambda},
\end{equation}
where, on the left-hand side, we have used that $K=[-m,m]^n$, thus
\[
\left(2^kK_0+(-1,1)^n\right)\cap \Z^n= 2^kK\cap \Z^n
\]
and
\[
\left(K_0+(-2^{-k},2^{-k})^n\right)\cap2^{-k}\Z^n= K\cap2^{-k}\Z^n.
\]
		
The level sets $\{x \in K: h(x)\geq t\}$ are closed, because $h$ is upper semicontinuous and $K$ is closed
(see \cite[Theorem~1.6]{RoWe}),
and then a standard straightforward computation shows that
\[\{x \in K: h(x)\geq t\}=\bigcap_{k=1}^\infty\left(\{x \in K: h(x)\geq t\}+(-2^{-k},2^{-k})^n\right).\]
Moreover, since $h$ vanishes outside $K$, we have
$\{x \in K: h(x)> t\}+(-2^{-k},2^{-k})^n\supset\{x \in K+[0,2^{-k}]^n: h^{\symbol_k}(x)> t\}$ for all $t>0$.
Thus, by Fubini's theorem and the monotone convergence theorem, we get
\begin{equation}\label{e: ints_h_h^symbol}
\begin{split}
\int_K h(x) \dlat x &= \int_0^\infty \vol\bigl(\{x \in K: h(x)\geq t\}\bigr) \dlat t\\
&= \int_0^\infty \vol\left(\bigcap_{k=1}^\infty\left(\{x \in K: h(x)\geq t\}+(-2^{-k},2^{-k})^n\right)\right) \dlat t\\
&= \int_0^\infty \lim_{k\rightarrow\infty} \vol\left(\{x \in K: h(x)\geq t\}+(-2^{-k},2^{-k})^n\right) \dlat t \\
&= \lim_{k\rightarrow\infty} \int_0^\infty \vol\left(\{x \in K: h(x)\geq t\}+(-2^{-k},2^{-k})^n\right) \dlat t \\
&\geq \lim_{k\rightarrow\infty}\int_0^\infty \vol\bigl(\{x \in K+[0,2^{-k}]^n: h^{\symbol_k}(x)> t\}\bigr) \dlat t \\
&=\lim_{k\rightarrow\infty}\int_{K+[0,2^{-k}]^n} h^{\symbol_k}(x) \dlat x.
\end{split}
\end{equation}


Now we show that, given $z\in\R^n$, $h^{\symbol_k}(x)\geq(h_k)^{\symbol_k}(z)$ for all $x\in z+[0,2^{-k}]^n$. Indeed, we have
\begin{equation*}
\begin{split}
h^{\symbol_k}(x)&=\sup_{u\in(-2^{-k},2^{-k})^n} h(x+u)\geq\sup_{u\in(-2^{-k},2^{-k})^n} \inf_{v\in[0,2^{-k}]^n}h(z+v+u)\\
&=\sup_{u\in(-2^{-k},2^{-k})^n} h_k(z+u)=(h_k)^{\symbol_k}(z).
\end{split}
\end{equation*}
This, together with \eqref{e: ints_h_h^symbol} and the fact that $K+[0,2^{-k}]^n=K\cap2^{-k}\Z^n + [0,2^{-k}]^n$, implies that
\begin{equation*}
\int_K h(x) \dlat x \geq \lim_{k\rightarrow\infty}\int_{K+[0,2^{-k}]^n} h^{\symbol_k}(x) \dlat x
\geq \lim_{k\rightarrow\infty} 2^{-kn} \sum_{z \in K\cap2^{-k}\Z^n} (h_k)^{\symbol_k}(z).
\end{equation*}
Furthermore, since $f$ is Riemann integrable and $2^{-kn}\sum_{x \in K_0\cap2^{-k}\Z^n} f_k(x)$ is a \textit{lower sum} of $f\cdot\chi_{_{(-m,m]^n}}$ for the partition $\{x + [0,2^{-k}]^n\subset K: x\in2^{-k}\Z^n\}$ of $K$,
it is clear that
\begin{equation*}
\lim_{k\rightarrow\infty} 2^{-kn}\sum_{x \in K_0\cap2^{-k}\Z^n} f_k(x) = \int_K f(x)\dlat x.
\end{equation*}

Here we observe that it was crucial to work with $K_0$ in order to get a lower sum of $f\cdot\chi_{_{(-m,m]^n}}$
for the above partition. We also point out the necessity of considering the characteristic function $\chi_{_{(-m,m]^n}}$ instead of $\chi_{_{[-m,m]^n}}$, which has no influence when computing the above integral: in this way, the function $f\cdot\chi_{_{(-m,m]^n}}$ vanishes on the points of the corresponding facets of the cube.

The same holds for the function $g$ and then, taking limits on both sides of \eqref{e:discBBL_lattice2-kZn},
we get \eqref{e:BBL_over_K}.
Since \eqref{e:BBL_over_K} is true for $K=[-m,m]^n$, for every $m\in\N$, the proof is now concluded because
\[ \int_{\R^n} \phi(x) \dlat x = \lim_{m\rightarrow\infty} \int_{[-m,m]^n} \phi(x)\dlat x,\]
for every measurable function $\phi:\R^n\rightarrow\R_{\geq0}$.
\end{proof}

It is well-known that a function is Riemann integrable if and only if it is continuous almost everywhere. Since the boundary of a convex set has null measure (and from the characterization of the upper semicontinuity in terms of the level sets) we get the following result, as a straightforward consequence of the previous one.
\begin{corollary}
The discrete Brunn-Minkowski type inequality \eqref{e: BM_lattice_point_no_G(K)G(L)>0} implies the classical Brunn-Minkowski inequality \eqref{e:BM} for bounded convex sets $K$ and $L$.
\end{corollary}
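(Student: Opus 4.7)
The plan is to apply the previous theorem with the parameter $p=\infty$ to carefully chosen characteristic functions, and to check that the regularity hypotheses (Riemann integrability of $f,g$ and upper semicontinuity of $h$) follow from convexity. Since the boundary of a bounded convex set has Lebesgue measure zero, I would first replace $K$ and $L$ by their closures without altering either side of \eqref{e:BM}; thus we may assume $K,L$ are compact. Then set $f=\chi_{_K}$, $g=\chi_{_L}$, $h=\chi_{_{(1-\lambda)K+\lambda L}}$, and work with $p=\infty$, for which $p/(np+1)=1/n$.

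First, I would verify the pointwise $p$-mean hypothesis of Theorem \ref{t: BBL discreta (caso p-media)}: by the convention $\Media{\infty}{a}{b}{\lambda}=0$ whenever $ab=0$, the inequality $h((1-\lambda)x+\lambda y)\geq \Media{\infty}{f(x)}{g(y)}{\lambda}$ is automatic unless $f(x)=g(y)=1$, i.e., $x\in K$ and $y\in L$; in that case $(1-\lambda)x+\lambda y\in (1-\lambda)K+\lambda L$ and hence $h((1-\lambda)x+\lambda y)=1=\max\{f(x),g(y)\}$.

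Next, I would check the regularity conditions imposed by the preceding theorem. The functions $\chi_{_K}$ and $\chi_{_L}$ are bounded and their sets of discontinuities are contained in $\bd K$ and $\bd L$, which have null Lebesgue measure because $K$ and $L$ are convex; hence $f$ and $g$ are continuous almost everywhere, therefore Riemann integrable. For $h=\chi_{_{(1-\lambda)K+\lambda L}}$, the set $(1-\lambda)K+\lambda L$ is compact (being the Minkowski sum of two compact sets), so the superlevel sets $\{x:h(x)\geq t\}$ are $\emptyset$, $(1-\lambda)K+\lambda L$, or $\R^n$---in every case closed, which is equivalent to $h$ being upper semicontinuous.

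At this point the preceding theorem applies and delivers the Borell-Brascamp-Lieb inequality \eqref{e:BBL_means} for this choice of $f,g,h$ with $p=\infty$; evaluating the integrals yields
$$\vol\bigl((1-\lambda)K+\lambda L\bigr)\geq \bigl((1-\lambda)\vol(K)^{1/n}+\lambda \vol(L)^{1/n}\bigr)^n,$$
namely \eqref{e:BM}. There is no substantial obstacle beyond this bookkeeping: convexity supplies precisely the two regularity properties required, which is the content of the parenthetical hint preceding the corollary.
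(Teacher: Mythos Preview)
Your proof is correct and follows exactly the route sketched in the paper: apply the preceding theorem with $p=\infty$ to $f=\chi_{_K}$, $g=\chi_{_L}$, $h=\chi_{_{(1-\lambda)K+\lambda L}}$, using that convexity forces $\bd K$, $\bd L$ to have null measure (so $f,g$ are Riemann integrable) and that compactness of $(1-\lambda)K+\lambda L$ makes $h$ upper semicontinuous. Your preliminary passage to closures and your explicit verification of the $p$-mean hypothesis simply fill in details the paper leaves implicit.
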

We notice the necessity of assuming convexity in the latter result: for any measurable sets $K, L \subset \R^n$
of positive volume, containing no rational point, one cannot expect to recover the Brunn-Minkowski inequality
\eqref{e:BM} with the above method of shrinking the lattice $\Z^n$, since $K, L$ have no point in $2^{-k}\Z^n$, for any $k\in\N$.

\section{Relations with other inequalities}

The multiplicative version of the Brunn-Minkowski inequality is, among all its equivalent forms, the one that is
naturally connected to the Pr\'ekopa-Leindler inequality (the case $p=0$ of Theorem \ref{t:BBL_means}).
It asserts that if $\lambda\in(0,1)$ and $K$ and $L$ are non-empty compact subsets of $\R^n$ then
\begin{equation}\label{e:BM_multip}
\vol\bigl((1-\lambda)K+\lambda L\bigr)\geq
\vol(K)^{1-\lambda}\vol(L)^{\lambda}.
\end{equation}

In the discrete setting, considering now non-empty bounded sets $K, L\subset\R^n$,
from \eqref{e: BM_lattice_point_general} for $p=0$ we get
\begin{equation}\label{e:disc_multip_BM}
\G{n}\bigl((1-\lambda)K+\lambda L+(-1,1)^n\bigr)\geq\G{n}(K)^{1-\lambda}\G{n}(L)^\lambda.
\end{equation}
Regarding other possible discrete versions of \eqref{e:BM_multip}, we have the following engaging and elegant result, shown very recently by Halikias, Klartag and Slomka in \cite{HKS} (see also \cite{KL}):
\begin{thm}[\cite{HKS}]\label{t:HKS}
Let $\lambda\in(0,1)$ and let $f,g,h,k:\Z^n\longrightarrow\R_{\geq0}$ be functions
such that
\[h(\floor{(1-\lambda)x+\lambda y})k(\ceil{\lambda x+(1-\lambda) y})\geq f(x)g(y)\]
for all $x,y\in\Z^n$, where $\floor{x}=(\floor{x_1},\dots,\floor{x_n})$ and
$\ceil{x}=(\ceil{x_1},\dots,\ceil{x_n})$. Then
\[\left(\sum_{x\in\Z^n}h(x)\right)\left(\sum_{x\in\Z^n}k(x)\right)\geq
\left(\sum_{x\in\Z^n}f(x)\right)\left(\sum_{x\in\Z^n}g(x)\right).\]
\end{thm}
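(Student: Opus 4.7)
The plan is to proceed by induction on the dimension $n$. The inductive step is a clean tensorization, following the Fubini template used in the proof of Theorem~\ref{t: BBL discreta (caso p-media)}: since both $\lfloor\cdot\rfloor$ and $\lceil\cdot\rceil$ act on $\Z^n$ coordinate-wise, for every pair $t,t'\in\Z$ the sections $f(\,\cdot\,,t)$, $g(\,\cdot\,,t')$, $h(\,\cdot\,,\lfloor(1-\lambda)t+\lambda t'\rfloor)$ and $k(\,\cdot\,,\lceil\lambda t+(1-\lambda)t'\rceil)$ on $\Z^{n-1}$ satisfy the $(n-1)$-dimensional hypothesis of the theorem. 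Applying the inductive assumption yields
\[
H\bigl(\lfloor(1-\lambda)t+\lambda t'\rfloor\bigr)\,K\bigl(\lceil\lambda t+(1-\lambda)t'\rceil\bigr)\;\geq\;F(t)\,G(t'),
\]
where $F(t)=\sum_{x\in\Z^{n-1}}f(x,t)$ and $G,H,K$ are defined analogously. This is exactly the one-dimensional hypothesis for $F,G,H,K$; the $n=1$ case applied to them then gives $(\sum h)(\sum k)=(\sum_s H(s))(\sum_{s'}K(s'))\geq(\sum_t F(t))(\sum_{t'}G(t'))=(\sum f)(\sum g)$. Hence the whole argument reduces to the case $n=1$.

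For $n=1$, the natural starting point is the identity
\[
\lfloor(1-\lambda)x+\lambda y\rfloor+\lceil\lambda x+(1-\lambda)y\rceil\;=\;x+y \qquad(x,y\in\Z),
\]
a consequence of $\lfloor a\rfloor+\lceil s-a\rceil=s$ for $s\in\Z$, $a\in\R$. Thus the transport map $\Phi(x,y):=(\lfloor(1-\lambda)x+\lambda y\rfloor,\lceil\lambda x+(1-\lambda)y\rceil)$ preserves the anti-diagonal sum $x+y$. Re-parameterizing the source by the difference $d=x-y$, a direct check gives that on each slice $\{x-y=d\}$ the map $\Phi$ acts as the integer translation $x\mapsto(x-\lceil\lambda d\rceil,\,x-\lfloor(1-\lambda)d\rfloor)$, a bijection of $\Z$ onto the sub-diagonal $\{v-u=\psi(d)\}$ of $\Z^2$, where $\psi(d):=\lceil\lambda d\rceil-\lfloor(1-\lambda)d\rfloor$. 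Summing the hypothesis pointwise along each slice yields the slice comparison
\[
T_{\psi(d)}\;\geq\;S_d,\qquad S_d:=\sum_{x\in\Z} f(x)g(x-d),\quad T_e:=\sum_{u\in\Z} h(u)k(u+e),
\]
together with the identities $\sum_d S_d=(\sum f)(\sum g)$ and $\sum_e T_e=(\sum h)(\sum k)$.

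The main obstacle is to deduce $\sum_e T_e\geq\sum_d S_d$ from the slice comparison alone: since $\psi:\Z\to\Z$ is roughly affine of slope $2\lambda-1\in(-1,1)$, its fibers have size up to $\lceil 1/|1-2\lambda|\rceil$ (and for $\lambda=\tfrac12$ they are infinite, with image $\{0,1\}$), so summing $T_{\psi(d)}\geq S_d$ naively over $d$ overcounts the right-hand side and yields at best a bound $(\sum h)(\sum k)\geq (\sum f)(\sum g)/\lceil 1/|1-2\lambda|\rceil$, weaker than the theorem by that factor. Removing this overcounting is the crux of the proof, and must use the full pointwise hypothesis rather than its slice sums. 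My plan would be to deploy an Ahlswede--Daykin-type four-function inequality with $\lfloor(1-\lambda)x+\lambda y\rfloor$ and $\lceil\lambda x+(1-\lambda)y\rceil$ playing the roles of a \emph{generalized meet} and \emph{generalized join} of $x,y$ (noting that both lie in $[\min(x,y),\max(x,y)]$ and sum to $x+y$), with the log-supermodular input supplied by symmetrizing the hypothesis against its copy under $(x,y)\leftrightarrow(y,x)$ (which swaps $\lambda\leftrightarrow 1-\lambda$ and the two components of $\Phi$). Alternatively, one may seek a Cauchy--Schwarz bound on the bilinear form $\sum_{(x,y)}\sqrt{f(x)g(y)h(\Phi_1(x,y))k(\Phi_2(x,y))}$, using the hypothesis to dominate one side and Cauchy--Schwarz the other. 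Packaging the pointwise information in a way that cleanly upgrades the slice comparison to the desired product bound is where I expect the real work to lie.
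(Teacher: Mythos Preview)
The paper does not contain a proof of this theorem: it is stated as a cited result from \cite{HKS} (Halikias, Klartag and Slomka) and used as a black box to derive \eqref{e:disc_multip_BM_HKS}. There is therefore no ``paper's own proof'' against which to compare your attempt.

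As for your proposal itself: the tensorization step is correct and clean, and the one-dimensional analysis is accurate. In particular, your identity $\lfloor(1-\lambda)x+\lambda y\rfloor+\lceil\lambda x+(1-\lambda)y\rceil=x+y$ and the computation that $\Phi$ sends the diagonal $\{x-y=d\}$ bijectively onto $\{v-u=\psi(d)\}$ with $\psi(d)=2\lceil\lambda d\rceil-d$ are both right, as is your observation that $\psi$ is not injective (its successive differences lie in $\{-1,1\}$, and for $\lambda=1/2$ its image is $\{0,1\}$). So the obstacle you flag is genuine: the slice comparison $T_{\psi(d)}\geq S_d$ alone cannot yield the conclusion.

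However, your proof stops precisely at this point. You propose two possible completions---an Ahlswede--Daykin-type four-function argument, or a Cauchy--Schwarz manoeuvre---but carry out neither. The first is not obviously applicable: the standard four-function lemma requires the genuine lattice operations $\min,\max$ (or at least a distributive lattice structure), and your ``generalized meet/join'' do not satisfy the absorption/idempotence axioms, so one would have to prove a new variant rather than invoke an existing one. The second suggestion is left entirely unspecified. Since the whole content of the theorem lies in this step, what you have written is a correct reduction together with an honest acknowledgement that the core argument is missing, not a proof.
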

As they observed, when applying the above result to the functions $f=\chi_{_K}$, $g=\chi_{_L}$, $h=\chi_{_{\frac{K+L}{2}+(-1,0]^n}}$
and $k=\chi_{_{\frac{K+L}{2}+[0,1)^n}}$, one has
\begin{equation*}
\G{n}\left(\frac{K+L}{2}+(-1,0]^n\right)\G{n}\left(\frac{K+L}{2}+[0,1)^n\right)\geq\G{n}(K)\G{n}(L),
\end{equation*}
which yields the discrete multiplicative Brunn-Minkowski type inequality
\begin{equation}\label{e:disc_multip_BM_HKS}
\G{n}\left(\frac{K+L}{2}+[0,1]^n\right)\geq\sqrt{\G{n}(K)\G{n}(L)}.
\end{equation}
We notice that the sole difference between \eqref{e:disc_multip_BM_HKS} and \eqref{e:disc_multip_BM} (for $\lambda=1/2$) is the necessity of adding either the closed cube of edge length $1$ or the open cube of edge length $2$, respectively. However, they are not comparable. Indeed, let $n=1$ and let $K=L=[-x,x]$ with $x\in\R_{\geq0}$. On the one hand,
for $x\in\Z$, we have $\G{1}\bigl((K+L)/2+[0,1]\bigr)=2x+2>2x+1=\G{1}\bigl((K+L)/2+(-1,1)\bigr)$.
On the other hand, for $x\notin\Z$, we get
$\G{1}\bigl((K+L)/2+[0,1]\bigr)=2\floor{x}+2<2\floor{x}+3=\G{1}\bigl((K+L)/2+(-1,1)\bigr)$.

\smallskip

As pointed out in Remark \ref{r:best_interval}, inequality \eqref{e: BM_lattice_point_no_G(K)G(L)>0} is in general not true (even for $\lambda=1/2$) by just adding the cube $[0,1)^n$ to the convex combination $(K+L)/2$.
However, this can be solved by just considering the closed cube $[0,1]^n$, i.e., we show that
inequality \eqref{e:disc_multip_BM_HKS} also admits a $(1/n)$-form:
\begin{theorem}\label{t: BM_lattice_point_1/2}
Let $K, L \subset \R^n$ be bounded sets such that $\G{n}(K)\G{n}(L)>0$. Then
\begin{equation}\label{e: BM_lattice_point_1/2}
\G{n}\left(\frac{K + L}{2} + [0,1]^n\right)^{1/n}\geq \frac{\G{n}(K)^{1/n}+\G{n}(L)^{1/n}}{2}.
\end{equation}
The inequality is sharp.
\end{theorem}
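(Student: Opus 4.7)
The plan is to derive Theorem~\ref{t: BM_lattice_point_1/2} as an almost immediate corollary of Theorem~\ref{t: BM_lattice_point_[-(q-1)/q,(q-1)/q]} applied with $\alpha=\beta=1/2$ and $q=2$, after a harmless integer translation of one of the sets. For those parameters the auxiliary cube $[-(q-1)/q,(q-1)/q]^n$ becomes $[-1/2,1/2]^n$, and the only mismatch with the statement is that we want the cube $[0,1]^n$ instead; but these two cubes differ exactly by the vector $\mathbf{1}/2$, where $\mathbf{1}=(1,\dots,1)\in\Z^n$. The key observation is that since $\mathbf{1}\in\Z^n$, replacing $K$ by $K+\mathbf{1}$ does not alter its lattice point enumerator, while it introduces precisely the half-integer translation needed to convert $[-1/2,1/2]^n$ into $[0,1]^n$ on the left-hand side.

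Concretely, I would apply Theorem~\ref{t: BM_lattice_point_[-(q-1)/q,(q-1)/q]} to the bounded sets $K+\mathbf{1}$ and $L$ (which still satisfy $\G{n}(K+\mathbf{1})\G{n}(L)=\G{n}(K)\G{n}(L)>0$) with $\alpha=\beta=1/2$, $q=2$, to obtain
\[
\G{n}\Bigl(\tfrac{1}{2}(K+\mathbf{1})+\tfrac{1}{2}L+[-\tfrac{1}{2},\tfrac{1}{2}]^n\Bigr)^{1/n}\geq\tfrac{1}{2}\G{n}(K+\mathbf{1})^{1/n}+\tfrac{1}{2}\G{n}(L)^{1/n}.
\]
Using $\G{n}(K+\mathbf{1})=\G{n}(K)$ on the right and rewriting the Minkowski sum on the left as
\[
\tfrac{1}{2}(K+\mathbf{1})+\tfrac{1}{2}L+[-\tfrac{1}{2},\tfrac{1}{2}]^n=\tfrac{K+L}{2}+\Bigl(\tfrac{\mathbf{1}}{2}+[-\tfrac{1}{2},\tfrac{1}{2}]^n\Bigr)=\tfrac{K+L}{2}+[0,1]^n
\]
then delivers \eqref{e: BM_lattice_point_1/2} directly.

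For sharpness, I would take $K=\{0\}$ and $L=\{\mathbf{1}\}$, so that $\G{n}(K)=\G{n}(L)=1$ and the right-hand side of \eqref{e: BM_lattice_point_1/2} equals $1$; on the other hand $(K+L)/2+[0,1]^n=\mathbf{1}/2+[0,1]^n=[1/2,3/2]^n$ contains exactly the single integer point $\mathbf{1}$, so the left-hand side also equals~$1$. There is essentially no serious obstacle to this argument once Theorem~\ref{t: BM_lattice_point_[-(q-1)/q,(q-1)/q]} is in hand, as the genuine work has already been carried out there (through the one-dimensional Lemma~\ref{l: G(alpha K+beta L+[-(q-1)/q,(q-1)/q])}); the only subtle point is recognising that an integer shift exactly converts the symmetric cube supplied by that theorem into the cube $[0,1]^n$ appearing in the present statement, thereby aligning the two.
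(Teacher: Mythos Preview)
Your argument is correct and considerably more economical than the route taken in the paper. The paper does \emph{not} derive Theorem~\ref{t: BM_lattice_point_1/2} from Theorem~\ref{t: BM_lattice_point_[-(q-1)/q,(q-1)/q]}; instead it proves from scratch a separate one-dimensional lemma (Lemma~\ref{l: G(M+[0,1])>=1/2(G(K)+G(L))}, establishing $\G{1}\bigl((K+L)/2+[0,1]\bigr)\geq(\G{1}(K)+\G{1}(L))/2$ via a floor-function estimate analogous to \eqref{e:property_floor_(mx+py)/q}) and then reruns the inductive scheme of Theorem~\ref{t: BBL discreta (caso p-media)} with the characteristic functions and $\lambda=1/2$, substituting this new one-dimensional input for \eqref{e: 1-dim_BM_G}. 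Your observation that the integer translation $K\mapsto K+\mathbf{1}$ converts the symmetric cube $[-1/2,1/2]^n$ supplied by Theorem~\ref{t: BM_lattice_point_[-(q-1)/q,(q-1)/q]} (with $q=2$) exactly into $[0,1]^n$, while leaving both sides otherwise unchanged, short-circuits all of that repeated work. In effect you have noticed that Lemma~\ref{l: G(M+[0,1])>=1/2(G(K)+G(L))} is itself an immediate consequence of Lemma~\ref{l: G(alpha K+beta L+[-(q-1)/q,(q-1)/q])} by the same shift, something the paper does not exploit. Your sharpness example ($K=\{0\}$, $L=\{\mathbf{1}\}$) is also valid, though different from the paper's choice $K=-L=[0,m]^n$ with $m$ odd.
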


As in Theorem \ref{t: BM_lattice_point_[-(q-1)/q,(q-1)/q]}, we just have to prove the corresponding one-dimen\-sio\-nal case, collected in Lemma \ref{l: G(M+[0,1])>=1/2(G(K)+G(L))}.
Then, the proof of Theorem \ref{t: BM_lattice_point_1/2} is completed in a way similar to the proof of Theorem
\ref{t: BBL discreta (caso p-media)} with the particular functions $f=\chi_{_K}$, $g=\chi_{_L}$ and $h=\chi_{_{\frac{K+L}{2}}}$, and $\lambda=1/2$, replacing there the use of inequality \eqref{e: 1-dim_BM_G} by
\eqref{e: G((K+L)/2+[0,1])>=1/2(G(K)+G(L))}.

\smallskip

Finally, to show that equality may be attained, we consider $K=-L=[0,m]^n$ with $m\in\N$ odd, for which
we have $\G{n}\bigl((K + L)/2+[0,1]^n\bigr)=\G{n}(K)=\G{n}(L)=(m+1)^n$.

\begin{lemma}\label{l: G(M+[0,1])>=1/2(G(K)+G(L))}
Let $K, L \subset \R$ be non-empty bounded sets. Then
\begin{equation}\label{e: G((K+L)/2+[0,1])>=1/2(G(K)+G(L))}
\G{1}\left(\frac{K+L}{2}+[0,1]\right) \geq \frac{\G{1}(K) + \G{1}(L)}{2}.
\end{equation}
\end{lemma}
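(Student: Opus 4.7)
The plan is to adapt the strategy of Lemma~\ref{l: G(M)+Z(M)>= (1-lam)G(K)+lamG(L)} and Lemma~\ref{l: G(alpha K+beta L+[-(q-1)/q,(q-1)/q])}: reduce to finite integer sets, establish a floor/ceiling estimate tailored to adding $[0,1]$, and prove an auxiliary statement for every $M\supset(K+L)/2$ such that $M+[0,1]$ is a finite union of pairwise disjoint compact intervals. For the reduction, replacing $K,L$ by $K':=K\cap\Z$ and $L':=L\cap\Z$ preserves $\G{1}(K)$ and $\G{1}(L)$ while, by the monotonicity of $\G{1}(\cdot)$, it can only shrink the left-hand side (since $(K'+L')/2+[0,1]\subset(K+L)/2+[0,1]$). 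Thus I may assume $K,L\subset\Z$ are finite, so that $M:=(K+L)/2$ is itself finite and $M+[0,1]$ is automatically a finite union of pairwise disjoint compact intervals (after merging overlapping components).

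The key arithmetic observation I would establish is that for all $u,v\in\R$,
\[
\floor{\frac{u+v}{2}+1}\geq\frac{\floor{u}+\floor{v}}{2}+\frac{1}{2}\qquad\text{and}\qquad\ceil{\frac{u+v}{2}}\leq\frac{\ceil{u}+\ceil{v}}{2}+\frac{1}{2}.
\]
By monotonicity of the floor/ceiling both reduce to integer $u,v$, where the claim is immediate upon distinguishing the parities of $u+v$.

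I would then prove by induction on $s\geq 1$ that, whenever $(K+L)/2\subset M$ and $M+[0,1]=\bigcup_{i=1}^{s}[a_i,b_i]$ is a disjoint union of compact intervals, one has $\G{1}(M+[0,1])\geq(\G{1}(K)+\G{1}(L))/2$. For $s=1$, set $\alpha=\inf K$, $\beta=\sup K$, $\gamma=\inf L$, $\delta=\sup L$; the inclusion $(K+L)/2+[0,1]\subset[a_1,b_1]$ forces $a_1\leq(\alpha+\gamma)/2$ and $b_1\geq(\beta+\delta)/2+1$, so the two estimates above give $\floor{b_1}\geq(\floor{\beta}+\floor{\delta})/2+1/2$ and $\ceil{a_1}\leq(\ceil{\alpha}+\ceil{\gamma})/2+1/2$. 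Subtracting and using $\G{1}(K)\leq\floor{\beta}-\ceil{\alpha}+1$ and $\G{1}(L)\leq\floor{\delta}-\ceil{\gamma}+1$ yields the desired bound. For the inductive step I write $M=M_1\sqcup M_2$ with $M_1+[0,1]=[a_1,b_1]$ and $M_2+[0,1]=\bigcup_{i=2}^{s+1}[a_i,b_i]$; assuming, without loss of generality, that $M_1\cap((K+L)/2)\neq\emptyset$ (otherwise apply the inductive hypothesis directly to $M_2$), boundedness of $K,L$ lets me choose $k\in\cl K$ and $l\in\cl L$ with $(k+l)/2=\sup(M_1\cap((K+L)/2))$. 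Splitting $K,L$ at $k,l$ into $K_1,K_2,L_1,L_2$ gives $(K_i+L_i)/2\subset M_i$ for $i=1,2$, exactly as in Lemma~\ref{l: G(M)+Z(M)>= (1-lam)G(K)+lamG(L)}, and since $M_1+[0,1]$ and $M_2+[0,1]$ are disjoint, additivity of $\G{1}$ together with the inductive hypothesis closes the argument.

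The only mildly delicate step is verifying the two floor/ceiling estimates—i.e., checking that adding the closed cube $[0,1]$, rather than the open cube $(-1,1)$ used in Lemma~\ref{l: 1-dim_BM_G}, still provides exactly the integer points needed to absorb the two potential $1/2$-losses coming from the parity of $\floor{\beta}+\floor{\delta}$ and $\ceil{\alpha}+\ceil{\gamma}$. The remainder is the same combinatorial bookkeeping as in the earlier one-dimensional lemmas.
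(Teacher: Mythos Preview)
Your proposal is correct and takes essentially the same approach as the paper. The paper establishes the equivalent floor estimate $\floor{(x+y)/2}+\tfrac{1}{2}\geq(\floor{x}+\floor{y})/2$, carries out the base case $s=1$ exactly as you describe, and then defers the induction step and the reduction to integer sets to the analogous argument in Lemma~\ref{l: G(alpha K+beta L+[-(q-1)/q,(q-1)/q])}, which is precisely what you have spelled out.
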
	
\begin{proof}
The proof is completely analogous to that of Lemma \ref{l: G(alpha K+beta L+[-(q-1)/q,(q-1)/q])}, and thus
we include here just the slight differences.

First we notice that, for any $x,y\in\R$, we have
\begin{equation}\label{e:property_floor_(x+y)/2}
\floor{\frac{x+y}{2}}+\frac{1}{2}\geq\frac{\floor{x}+\floor{y}}{2}.
\end{equation}
Indeed, if $\floor{x}+\floor{y}$ is even then $\floor{(x+y)/2}=(\floor{x}+\floor{y})/2$ whereas if
$\floor{x}+\floor{y}$ is odd we get $\floor{(x+y)/2}\geq(\floor{x}+\floor{y}-1)/2$.

\smallskip

Now, we notice that if $M\supset(K+L)/2$ is such that
$M +[0,1]= [a_1, b_1]$ is a (non-empty) compact interval, we have on the one hand that
$\G{1}\bigl(M+[0,1]\bigr) = \floor{b_1} - \ceil{a_1} +1$.
Moreover, denoting by $a = \inf K$, $b = \sup K$, $c = \inf L$ and $d = \sup L$, we clearly get
$\G{1}(K)\leq \G{1}([a,b]) = \floor{b} - \ceil{a} +1$ and $\G{1}(L) \leq \G{1}([c,d]) = \floor{d} - \ceil{c} +1$.
On the other hand, the inclusion $(K + L)/2 \subset M$ implies that
\begin{equation*}
a_1\leq\frac{a+c}{2}\leq\frac{b+d}{2}\leq b_1-1.
\end{equation*}
Altogether, and using \eqref{e:property_floor_(x+y)/2} jointly with the fact that
$\ceil{x}=-\floor{-x}$ for any $x\in\R$, we obtain
\begin{equation*}
\begin{split}
\G{1}\bigl(M+[0,1]\bigr) &= \floor{b_1} -\ceil{a_1} + 1 \geq  \floor{\frac{b+d}{2}} - \ceil{\frac{a+c}{2}} + 2\\
&\geq\frac{\floor{b}+\floor{d}}{2}-\frac{\ceil{a}+\ceil{c}}{2}+1=\frac{\G{1}([a,b])+\G{1}([c,d])}{2}\\
&\geq\frac{\G{1}(K)+\G{1}(L)}{2}.
\end{split}
\end{equation*}
The proof is then completed in a way analogous to the proof of Lemma \ref{l: G(alpha K+beta L+[-(q-1)/q,(q-1)/q])}.
\end{proof}

We would like to point out that the corresponding version of Theorem \ref{t: BM_lattice_point_1/2} for an arbitrary $\lambda\in(0,1)$ is, in general, not true. Indeed, taking $K=[0,1]^n$, $L=[-5,6]^n$ and $\lambda=1/3$ one has that
\begin{equation*}
\begin{split}
\G{n}\bigl((1-\lambda)K+\lambda L+[0,1]^n\bigr)^{1/n}
&=\G{n}\left(\left[\frac{-5}{3},\frac{11}{3}\right]^n\right)^{1/n}=5\\
&<\frac{16}{3}=(1-\lambda)\G{n}(K)^{1/n}+\lambda\G{n}(L)^{1/n}.
\end{split}
\end{equation*}

\smallskip

We include here an open question that arose during our study:
\begin{question}\label{q:G{n}(K)G{n}(L)>0}
Regarding the statement of Theorem \ref{t: BM_lattice_point_1/2},
is the assumption $\G{n}(K)\G{n}(L)>0$ necessary?
\end{question}

We conclude the section by proving an inequality similar to \eqref{e:B-M_discrete_adding} but in the spirit of \eqref{e: BM_lattice_point_no_G(K)G(L)>0}, namely that one may add another (fixed) set to the Minkowski sum $A+B$ instead of considering the extension $\bar{A}$ of $A$. We show that an appropriate set to be taken into account in this respect is the lattice cube $\{0,1\}^n$, which also fits well with inequality \eqref{e: BM_lattice_point_1/2}.
\begin{theorem}\label{t:B-M_discrete_sum}
Let $A,B\subset\Z^n$ be finite, $A,B\neq\emptyset$. Then
\begin{equation}\label{e:B-M_discrete_sum}
\bigl|A+B+\{0,1\}^n\bigr|^{1/n}\geq |A|^{1/n}+|B|^{1/n}.
\end{equation}
The inequality is sharp.	
\end{theorem}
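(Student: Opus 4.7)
The plan is to reduce \eqref{e:B-M_discrete_sum} to the classical additive Brunn-Minkowski inequality \eqref{e:BM_additive} via a cube-thickening/tiling trick, using the fact that translates of $[0,1]^n$ by integer vectors pack $\R^n$ with null-measure overlaps. Concretely, I would attach a closed unit cube at every point of $A$ and $B$, and pass from cardinalities to Lebesgue volumes.

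So first I would introduce the compact sets
\[K := A + [0,1]^n \quad \text{and} \quad L := B + [0,1]^n.\]
Since $A, B \subset \Z^n$, the unit cubes $\{a + [0,1]^n : a \in A\}$ meet only in shared (lattice-aligned) facets of Lebesgue measure zero, whence $\vol(K) = |A|$ and likewise $\vol(L) = |B|$. The key geometric step is then the identity
\[K + L \;=\; A + B + [0,1]^n + [0,1]^n \;=\; A + B + [0,2]^n.\]
Next I would tile the larger cube as $[0,2]^n = \bigcup_{q \in \{0,1\}^n}\bigl(q+[0,1]^n\bigr)$ (again with only null-measure overlaps) to rewrite $K + L = \bigl(A + B + \{0,1\}^n\bigr) + [0,1]^n$; applying the same lattice-cube packing argument as before, now to the integer set $A + B + \{0,1\}^n \subset \Z^n$, yields
\[\vol(K + L) \;=\; \bigl|A + B + \{0,1\}^n\bigr|.\]

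With these two identifications in place, \eqref{e:BM_additive} applied to the compact sets $K$ and $L$ (both of positive volume, as $A, B \neq \emptyset$) immediately gives
\[\bigl|A + B + \{0,1\}^n\bigr|^{1/n} = \vol(K+L)^{1/n} \geq \vol(K)^{1/n} + \vol(L)^{1/n} = |A|^{1/n} + |B|^{1/n}.\]
For sharpness I would take $A = B = \{0, 1, \dots, m-1\}^n$ for $m \in \N$: then $A + B + \{0,1\}^n = \{0, 1, \dots, 2m-1\}^n$, so both sides equal $2m$. The smallest case $A = B = \{0\}$ already yields $|\{0,1\}^n|^{1/n} = 2 = |A|^{1/n} + |B|^{1/n}$.

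There is no real obstacle here: the argument is almost entirely bookkeeping about how axis-aligned unit cubes tile $\R^n$, and the inequality itself is extracted for free from classical Brunn-Minkowski. The only point that merits a line of justification is that all cube-overlaps (both in the computation of $\vol(K)$ and in the tiling of $[0,2]^n$) are unions of lattice-aligned facets, hence negligible for Lebesgue measure; this is what allows the cardinality-to-volume transfer to be exact on both sides of the inequality.
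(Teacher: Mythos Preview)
Your proof is correct and is essentially the same as the paper's: both thicken $A$ and $B$ by $[0,1]^n$, use the identity $A+B+\{0,1\}^n+[0,1]^n=(A+[0,1]^n)+(B+[0,1]^n)$ together with $\vol(X+[0,1]^n)=|X|$ for finite $X\subset\Z^n$, and then invoke the classical Brunn--Minkowski inequality \eqref{e:BM_additive}. The paper's sharpness example is the slightly more general $A=\{0,\dots,m_1\}^n$, $B=\{0,\dots,m_2\}^n$, but your $A=B=\{0,\dots,m-1\}^n$ is of course a special case of this.
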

The idea of the proof we present here goes back to \cite[Lemma~2.4]{GT}.
\begin{proof}
Taking into account the relation
\[A+B+\{0,1\}^n+[0,1]^n=A+[0,1]^n+B+[0,1]^n,\]
jointly with the fact that $\vol(X+[0,1]^n)=|X|$, for any non-empty and finite set $X\subset\Z^n$,
the result directly follows from the Brunn-Minkowski inequality \eqref{e:BM_additive}. Finally, to show that the inequality is sharp, we consider the lattice cubes $A=\{0,\dots,m_1\}^n$ and $B=\{0,\dots,m_2\}^n$, for $m_1, m_2\in\Z_{\geq0}$.
\end{proof}

We conclude the paper by comparing inequalities \eqref{e:B-M_discrete_adding} and \eqref{e:B-M_discrete_sum}; again, we refer the reader to Section 2.1 in \cite{HCIYN} for the precise definition of the extension $\bar{A}$ of a given non-empty finite set $A\subset\Z^n$.

Since $A+(\{0\}^{n-1}\times\{0,1\})$ contains at least the same amount of points that the union of $A$ and its maximal cardinality section, and taking into account that $\{0,1\}^n=\bigl(\{0,1\}^{n-1}\times\{0\}\bigr)+\bigl(\{0\}^{n-1}\times\{0,1\}\bigr)$,
from the definition of $\bar{A}$ is then immediate that $\bigl|\bar{A}\bigr|\leq\bigl|A+\{0,1\}^n\bigr|$.
However, inequalities \eqref{e:B-M_discrete_adding} and \eqref{e:B-M_discrete_sum} are not comparable.
Indeed, if we consider on the one hand $A=\{(0,0), (1,0), (2,0), (1,1)\}$ and $B=\{0,1\}^2$, we have
$\bigl|\bar{A}\bigr|=10=\bigl|A+\{0,1\}^2\bigr|$ (with $\bar{A}\neq A+\{0,1\}^2$) and $\bigl|\bar{A}+B\bigr|=20 > 18 = \bigl|A+\{0,1\}^2+B\bigr|$ (see Figure \ref{fig: bar{A}+B vs A+C+B}).
On the other hand, for $A=\{(0,0), (0,1), (1,1), (4,1)\}$ and $B=\{0,1\}^2$ we obtain
$\bigl|\bar{A}+B\bigr|=21 < 24 = \bigl|A+\{0,1\}^2+B\bigr|$.

\smallskip

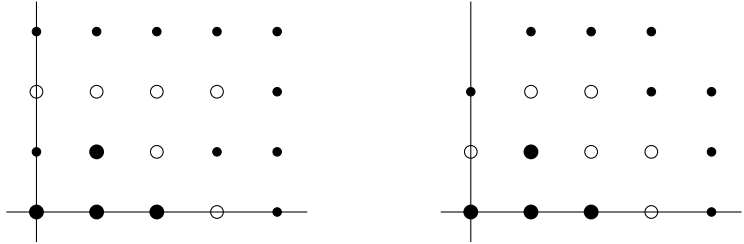
\begin{figure}[h]\label{fig: bar{A}+B vs A+C+B}
\centering
\begin{tikzpicture}[A/.style ={thick,color=black}, Amp/.style ={fill=none}, AmppB/.style ={color=black}, scale=0.8]
\draw (-0.5,0) -- (4.5,0);
\draw (0,-0.5) -- (0,3.5);
\filldraw[A] (0,0) circle (3pt);
\filldraw[A] (1,1) circle (3pt);
\filldraw[A] (1,0) circle (3pt);
\filldraw[A] (2,0) circle (3pt);
\filldraw[Amp] (0,2) circle (3pt);
\filldraw[Amp] (1,2) circle (3pt);
\filldraw[Amp] (2,2) circle (3pt);
\filldraw[Amp] (2,1) circle (3pt);
\filldraw[Amp] (3,2) circle (3pt);
\filldraw[Amp] (3,0) circle (3pt);
\filldraw[AmppB] (0,1) circle (2pt);
\filldraw[AmppB] (0,3) circle (2pt);
\filldraw[AmppB] (1,3) circle (2pt);
\filldraw[AmppB] (2,3) circle (2pt);
\filldraw[AmppB] (3,3) circle (2pt);
\filldraw[AmppB] (4,3) circle (2pt);
\filldraw[AmppB] (4,2) circle (2pt);
\filldraw[AmppB] (3,1) circle (2pt);
\filldraw[AmppB] (4,1) circle (2pt);
\filldraw[AmppB] (4,0) circle (2pt);
\end{tikzpicture}
\hspace{1.5cm}
\begin{tikzpicture}[A/.style ={thick,color=black}, Amp/.style ={fill=none}, AmppB/.style ={color=black}, scale=0.8]
\draw (-0.5,0) -- (4.5,0);
\draw (0,-0.5) -- (0,3.5);
\filldraw[A] (0,0) circle (3pt);
\filldraw[A] (1,1) circle (3pt);
\filldraw[A] (1,0) circle (3pt);
\filldraw[A] (2,0) circle (3pt);
\filldraw[Amp] (0,1) circle (3pt);
\filldraw[Amp] (1,2) circle (3pt);
\filldraw[Amp] (2,2) circle (3pt);
\filldraw[Amp] (2,1) circle (3pt);
\filldraw[Amp] (3,1) circle (3pt);
\filldraw[Amp] (3,0) circle (3pt);
\filldraw[AmppB] (0,2) circle (2pt);
\filldraw[AmppB] (1,3) circle (2pt);
\filldraw[AmppB] (2,3) circle (2pt);
\filldraw[AmppB] (3,3) circle (2pt);
\filldraw[AmppB] (4,2) circle (2pt);
\filldraw[AmppB] (3,2) circle (2pt);
\filldraw[AmppB] (4,1) circle (2pt);
\filldraw[AmppB] (4,0) circle (2pt);
\end{tikzpicture}
\caption{Left: the sets $A$ (thick points), $\bar{A}$ (thick and hollow points) and $\bar{A}+B$. Right:
the sets $A$ (thick points), $A+\{0,1\}^2$ (thick and hollow points) and $A+\{0,1\}^2+B$.}
\end{figure}

\medskip

\noindent {\it Acknowledgements.} We are very grateful to the anonymous referee for
her/his very helpful comments and remarks which have allowed us to improve the presentation of this work; we specially thank the referee for bringing to our attention the simpler and more elegant proof (than the one we originally did) of
Theorem \ref{t:B-M_discrete_sum}.
We would like to thank M. A. Hern\'andez Cifre for her very valuable suggestions and the careful reading of this paper.
We also thank D. Halikias, B. Klartag and B. A. Slomka for bringing to our attention Theorem \ref{t:HKS}
and J. Li for pointing out the possibility of considering the statement of Theorem \ref{t:B-M_discrete_sum}.

\end{document}